\newtheorem{theorem}{Theorem}[section]
\newtheorem{corollary}[theorem]{Corollary}
\newtheorem{definition}[theorem]{Definition}
\newtheorem{lemma}[theorem]{Lemma}
\newtheorem{remark}[theorem]{Remark}
\def\1#1{_{\ell^\infty(#1)}}\def\2#1{_{\ell^\infty(1/#1)}}
\begin{document}
\title[End-point Norm Estimates]{End-point Norm Estimates for Ces\`aro and Copson Operators}
\author{Sorina Barza}
\author{Bizuneh Minda Demissie}
\author{Gord Sinnamon}
\address{Department of Mathematics and Computer Science, Karlstad University, SE-65188 Karlstad, SWEDEN 
{\em E-mail: sorina.barza@kau.se} }
\address{Department of Mathematics and Computer Science, Karlstad University, SE-65188 Karlstad, SWEDEN and Department of Mathematics, Addis Ababa University, 1176 Addis Ababa, Ethiopia\newline
 {\em E-mail: bizuneh.minda@aau.edu.et} }
\address{Department of mathematics, University of Western Ontario, London, Canada 
{\em E-mail: sinnamon@uwo.ca}}
\keywords{ Operator norm, Ces\`aro operator, Copson operator, Best constant}

\subjclass[2020]{Primary 26D15; Secondary 47B37}

\begin{abstract}
For a large class of operators acting between weighted $\ell^\infty$ spaces, exact formulas are given for their norms and the norms of their restrictions to the cones of nonnegative sequences; nonnegative, nonincreasing sequences; and nonnegative, nondecreasing sequences. The weights involved are arbitrary nonnegative sequences and may differ in the domain and codomain spaces. The results are applied to the Ces\`aro and Copson operators, giving their norms and their distances to the identity operator on the whole space and on the cones. Simplifications of these formulas are derived in the case of these operators acting on power-weighted $\ell^\infty$. As an application, best constants are given for inequalities relating the weighted $\ell^\infty$ norms of the Ces\`aro and Copson operators both for general weights and for power weights.
\end{abstract}
\maketitle

\section{Introduction}
 
The Ces\`aro matrix, $C$, and its transpose the Copson matrix, $C^*$, are
\[
C={\scriptstyle\left(\begin{smallmatrix}
1&&&&\\
\vphantom{\frac12}\frac12&\frac12&&&\\
\vphantom{\frac12}\frac13&\frac13&\frac13&&\\
\vphantom{\frac12}\frac14&\frac14&\frac14&\frac14&\\
\vphantom{\frac12}\vdots&\vdots&\vdots&\vdots&\ddots\\
&\hphantom{-1}&\hphantom{-1}&\hphantom{-1}&\hphantom{-1}
\end{smallmatrix}\right)}
\quad\mbox{and}\quad 
C^*={\scriptstyle\left(\begin{smallmatrix}  
\vphantom{\frac12}1&\frac12&\frac13&\frac14&\dots\\
\vphantom{\frac12}&\frac12&\frac13&\frac14&\dots\\
\vphantom{\frac12}&&\frac13&\frac14&\dots\\
\vphantom{\frac12}&&&\frac14&\dots\\
&&&&\ddots\\
&\hphantom{-1}&\hphantom{-1}&\hphantom{-1}&\hphantom{-1}
\end{smallmatrix}\right)}.
\]
The same names denote the operators associated with these infinite matrices, defined by
\[
(Cx)_n=\frac1n\sum_{k=1}^nx_k \quad\mbox{and}\quad
(C^*x)_n=\sum_{k=n}^\infty\frac{x_k}k
\]
for $x$ an appropriate real sequence.  The motivation for this work is to determine best constants in the weighted two-operator inequalities
\begin{equation}\label{CandC*}
\|Cx\|\1v\le A\|C^*x\|\2u\quad \mbox{and}\quad 
 \|C^*x\|\1v\le A\|Cx\|\2u
\end{equation}
for all $x$ and also for all nonnegative $x$.

The Ces\`aro and Copson operators, together with their integral analogues,
\[
Hf(x)=\frac1x\int_0^xf(t)\,dt\quad\text{and}\quad H^*f(x)=\int_x^\infty f(t)\frac{dt}t,
\]
appear throughout classical and modern analysis. They were already standard tools in Fourier analysis when Hardy used them to give a simple proof of Hilbert's double series theorem from complex analysis. They serve as base cases and motivating examples for summability, positive operators, convolution inequalities, interpolation of operators, maximal functions and more.

Most relevant to our study, is their appearance in the theory of weighted norm inequalities. A remarkable array of techniques have been tried out on these operators for the first time and often the results set the standard for subsequent progress.


Recently, techniques for determination of exact operator norms, exact distances between operators, and best constants in two-operator inequalities have been worked out using $H$, $H^*$, $C$ and $C^*$ as motivating examples.

For the operators $H$ and $H^*$ a great deal of progress has been made in recent years. We refer to \cite{S17} and \cite{S20}, which, besides establishing the current best results for exact operator norms, include in their introductions detailed accounts of recent work. The contributions of Boza and Soria deserve special mention, recently from \cite{BS19} and \cite{BS20}, but going back to \cite{BS11}. In the first, they make a clear case for the independent study of restrictions of operators to cones of monotone functions. In the second, they point out the significance of understanding the action of operators in endpoint cases, i.e., the $p=1$,  $p=\infty$, and weak type cases among $\ell^p$ spaces.

For the operators $C$ and $C^*$, exact norms, distances and constants had already found an important place in Bennett's 1996 memoir \cite{B}. Some were proved and others were left as open problems. A few of the open problems have settled quite recently, see \cite{J, K, Si}.

Our focus on weighted $\ell^\infty$ spaces puts us firmly in the endpoint case, and greatly simplifies norm estimates. On the other hand our results apply for general weight sequences, something which is beyond the current reach when seeking exact operator norms in the $\ell^p$ spaces for $1<p<\infty$. We also consider the restrictions of operators to cones of monotone sequences, something of proven value.

Our approach is in two steps. First, we reduce the best constant problems for the two-operator inequalities \eqref{CandC*} for general $x$ or for nonnegative $x$ to the determination of the operator norm of a related matrix operator on a related cone of sequences. See Theorems \ref{CC*reduction} and \ref{C*Creduction}. Second, we prove and apply a result on matrix operator norms between cones in weighted $\ell^\infty$ spaces that is general enough to include the ones we need to solve the best constant problems. See Theorem \ref{NormOfB}. This result is of independent interest and we apply it to give the operator norms of a number of related matrix operators that have appeared in recent literature. Here the operators $C-I$ and $C^*-I$ figure prominently. The results of our analysis of \eqref{CandC*} are in Theorem \ref{BestConstants}.

The most commonly studied and applied weight sequences are the power weights. We illustrate our results throughout by giving concrete expressions for the best constants in the case of power weighted $\ell^\infty$. See Theorems \ref{SpecCpower} and \ref{SpecC*power} for exact operator norms for $C$ and $C^*$ on all four cones. See \ref{SpecCIpower} for exact distances from $C$ to the identity on all four cones. The exact distance from $C^*$ to the identity is given in \ref{SpecC*Ipower} on two of the cones. (The case of nondecreasing sequences is trivial and the case of nonincreasing sequences remains open.) The best constants in the two-operator inequalities are given in  \ref{powerCC*} and \ref{powerC*C} on the cone of all sequences and on the cone of all nonnegative sequences. 

\subsection{Notation and Definitions} For an infinite matrix to represent an operator on sequences, we have to decide in what sense the sums involved in matrix multiplication should converge.
\begin{definition} Let $B=(b_{n,k})_{n\ge1,k\ge1}$ be a real matrix. The domain, denoted $\mathcal D(B)$, of the associated matrix operator is the set of all real sequences $x$ such that for each $n$, the sum $\sum_{k=1}^\infty b_{n,k}x_k$ converges to a real number. For $x\in\mathcal D(B)$, we define the sequence $Bx$ by setting $(Bx)_n=\sum_{k=1}^\infty b_{n,k}x_k$.

If all entries of $B$ and $x$ are nonnegative, we use $(Bx)_n$ to denote the above sum even when $x\notin \mathcal D(B)$ by permitting $(Bx)_n$ to take the value $\infty$.
\end{definition}

This definition gives us larger domains than if we insisted on absolute convergence in all matrix sums. It means that our matrix operators don't correspond to standard integral operators as well as they correspond to operators defined by principal value integrals.


Besides $C$ and $C^*$ we will encounter the matrices $I$, $S$, $S^*$, $D$ and $E$. The first three are standard, the identity matrix, the right shift (with ones on the subdiagonal) and the left shift (with ones on the superdiagonal.) The other two are defined by 
$$
D={\scriptstyle\left(\begin{smallmatrix}  
\vphantom{\frac12}\frac12&&&&\\
\vphantom{\frac12}&\frac13&&&\\
\vphantom{\frac12}&&\frac14&&\\
\vphantom{\frac12}&&&\frac15&\\
&&&&\ddots\\
\hphantom{-1}&\hphantom{-1}&\hphantom{-1}&\hphantom{-1}&\hphantom{-1}
\end{smallmatrix}\right)}
\quad\mbox{and}\quad
E={\scriptstyle\left(\begin{smallmatrix}
\vphantom{\frac12}1&&&&\\
\vphantom{\frac12}1&1&&&\\
\vphantom{\frac12}1&1&1&&\\
\vphantom{\frac12}1&1&1&1&\\
\vdots&\vdots&\vdots&\vdots&\ddots\\
\hphantom{-1}&\hphantom{-1}&\hphantom{-1}&\hphantom{-1}&\hphantom{-1}
\end{smallmatrix}\right)}
$$
so that $(Dx)_n=x_n/(n+1)$ and $(Ex)_n=\sum_{k=1}^nx_k$. The domain of each these matrix operators, with the exception of $C^*$, consists of all real sequences. Evidently, $\mathcal D(C^*)$ consists of all real sequences $x$ for which $\sum_{k=1}^\infty\frac{x_k}k$ converges in $\mathbb R$.

Let $\ell$, $\ell^+$, $\ell^\downarrow$ and $\ell^\uparrow$ denote, respectively, the set of all sequences of real numbers, the set of all sequences of nonnegative real numbers, the set of all nonincreasing sequences of nonnegative real numbers, and the set of all nondecreasing sequences of nonnegative real numbers. Inequalities between sequences are termwise so for $x,y\in\ell$, $x\le y$ means $y-x\in\ell^+$, that is, $x_k\le y_k$ for all $k$.


For weights $u,v\in\ell^+$ and for any $x,y\in\ell$ we define
$$
\|y\|\1v=\sup_n |y_n|v_n\quad\mbox{and}\quad\|x\|\2u=\sup_k |x_k|/u_k.
$$
The two definitions agree, except that if $u_k =0$ for some $k$, the sequence $(1/u_1,1/u_2,\dots)$ is not in $\ell^+$. In this case we apply the convention $0/0=0$: If $u_k=0$ for some $k$ then $|x_k|/u_k=\infty$ when $x_k\ne 0$ and $|x_k|/u_k=0$ when $x_k=0$. Note that we permit these weighted ``norms'' to take the value $\infty$.

For a real number $x$, let $x^+=(|x|+x)/2\ge0$ and $x^-=(|x|-x)/2\ge0$. Note that $x=x^+-x^-$. This notation extends termwise to sequences and entrywise to matrices: If $x=(x_n)$ is a real sequence, then $x^+=((x_n)^+)$, $x^-=((x_n)^-)$, and $|x|=(|x_n|)$. If $B=(b_{n,k})$ then $B^+=((b_{n,k})^+)$, $B^-=((b_{n,k})^-)$ and $|B|=(|b_{n,k}|)$.
 
For $u\in\ell^+$ we define the greatest nonincreasing minorant $u^\downarrow$ of $u$ and the greatest nondecreasing minorant $u^\uparrow$ of $u$ by
$$
u^\downarrow_k=(u^\downarrow)_k=\min_{j\le k}u_j\quad\mbox{and}\quad u^\uparrow_k=(u^\uparrow)_k=\inf_{j\ge k}u_j.
$$
Their relevance emerges from the following simple observation.
\begin{lemma}\label{downandup}  Let $u\in\ell^+$. 
\begin{enumerate}[label=(\roman*)]
\item 
If $x\in\ell^\downarrow$, then $\|x\|\2u=\|x\|\2{u^\downarrow}$;
\item If $x\in\ell^\uparrow$, then $\|x\|\2u=\|x\|\2{u^\uparrow}$;
\item If $x\in\ell^\downarrow$ and $x\le u$ then $x\le u^\downarrow$.
\item If $x\in\ell^\uparrow$ and $x\le u$ then $x\le u^\uparrow$.
\end{enumerate}
\end{lemma}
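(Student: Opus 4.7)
The plan is to prove (iii) and (iv) first, then deduce (i) and (ii) from them together with the trivial direction coming from $u^\downarrow\le u$ and $u^\uparrow\le u$.

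For (iii), I would fix $k$ and, for each index $j$ with $j\le k$, combine the monotonicity inequality $x_k\le x_j$ (which holds because $x\in\ell^\downarrow$) with the hypothesis $x_j\le u_j$ to obtain $x_k\le u_j$. Taking the minimum over $j\le k$ gives $x_k\le \min_{j\le k}u_j=u^\downarrow_k$. Part (iv) is the mirror image: for $j\ge k$ use $x\in\ell^\uparrow$ to get $x_k\le x_j\le u_j$, then take the infimum over $j\ge k$ to land at $u^\uparrow_k$.

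For (i), one direction is immediate: since $u^\downarrow_k\le u_k$ for all $k$, the convention $0/0=0$ gives $|x_k|/u_k\le |x_k|/u^\downarrow_k$ pointwise, hence $\|x\|\2u\le\|x\|\2{u^\downarrow}$. For the reverse, set $M=\|x\|\2u$; the case $M=\infty$ is trivial, so assume $M<\infty$. Then $|x_k|/u_k\le M$ for every $k$, and the convention forces $x_k=0$ whenever $u_k=0$, so in all cases $x_k\le Mu_k$ (note $x_k\ge 0$ since $x\in\ell^\downarrow$). The sequence $x/M$ thus lies in $\ell^\downarrow$ and satisfies $x/M\le u$, so (iii) yields $x_k\le Mu^\downarrow_k$ for all $k$. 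This gives $|x_k|/u^\downarrow_k\le M$ when $u^\downarrow_k>0$; when $u^\downarrow_k=0$ we get $x_k=0$ and the convention makes $|x_k|/u^\downarrow_k=0\le M$. Taking the supremum over $k$ gives $\|x\|\2{u^\downarrow}\le M$. Part (ii) is handled identically, replacing $\ell^\downarrow$ by $\ell^\uparrow$, $u^\downarrow$ by $u^\uparrow$, and invoking (iv) in place of (iii).

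There is no real obstacle here; the only thing to watch is the bookkeeping around the $0/0=0$ convention and the possibility of infinite norm, both of which are dispatched by separating the case $M=\infty$ and noting that finite $M$ forces $x_k=0$ wherever the weight vanishes.
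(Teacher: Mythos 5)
Your proof is correct, but it runs the logical dependencies in the opposite direction from the paper. The paper proves (i) and (ii) first --- the nontrivial direction via the chain $x_k/u^\downarrow_k=\max_{j\le k}x_k/u_j\le\max_{j\le k}x_j/u_j\le\|x\|\2u$ --- and then obtains (iii) and (iv) as one-line corollaries: $x\le u$ gives $\|x\|\2u\le1$, hence $\|x\|\2{u^\downarrow}\le1$, hence $x\le u^\downarrow$. You instead prove (iii) and (iv) directly from monotonicity and then recover (i) and (ii) by normalizing by $M=\|x\|\2u$. The underlying inequality ($x_k\le x_j\le u_j$ for $j\le k$) is the same in both arguments, so neither route is deeper than the other; yours has the small advantage of making the $0/0$ bookkeeping explicit, while the paper's avoids any case analysis on $M$. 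One point of hygiene in your version: the step ``the sequence $x/M$ lies in $\ell^\downarrow$ and satisfies $x/M\le u$'' silently assumes $M>0$. When $M=0$ you should note that $x_k\le Mu_k=0$ forces $x\equiv0$ (since $x\ge0$), so the conclusion is immediate --- or, equivalently, observe that your argument for (iii) applies verbatim to the inequality $x\le Mu$ because $(Mu)^\downarrow=Mu^\downarrow$. With that noted, everything checks out.
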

\begin{proof} First observe that, since $u^\downarrow\le u$ and $u^\uparrow\le u $, 
\[
\|x\|\2u\le\|x\|\2{u^\downarrow}\quad\mbox{and}\quad\|x\|\2u\le\|x\|\2{u^\uparrow}.
\]
Let $x\in\ell^\downarrow$. For each $k$, 
\[
\frac{x_k}{u^\downarrow_k}=\max_{j\le k} \frac{x_k}{u_j}\le\max_{j\le k} \frac{x_j}{u_j}\le \|x\|\2u.
\]
Take the supremum over all $k$ to get $\|x\|\2{u^\downarrow}\le\|x\|\2u$.

Let $x\in\ell^\uparrow$. For each $k$, 
\[
\frac {x_k}{u^\uparrow_k}=\sup_{j\ge k} \frac{x_k}{u_j}\le\sup_{j\ge k} \frac{x_j}{u_j}\le \|x\|\2u.
\]
Take the supremum over all $k$ to get $\|x\|\2{u^\uparrow}\le\|x\|\2u$. 

If $x\in\ell^\downarrow$ and $x\le u$, then $\|x\|\2u\le1$ so $\|x\|\2{u^\downarrow}\le1$ and therefore $x\le u^\downarrow$. If $x\in\ell^\uparrow$ and $x\le u$, then $\|x\|\2u\le1$ so $\|x\|\2{u^\uparrow}\le1$ and therefore $x\le u^\uparrow$.
\end{proof}

\section{Two Identities}

In this section we use two matrix identities to connect the inequalities \eqref{CandC*} to norm inequalities for related operators. The identities are
$$
C=(C-S^*)C^*\quad\mbox{and}\quad C^*=(C^*-S)DE.
$$
In Section 10 of \cite{B}, Bennett uses the first identity and one closely related to the second, namely, $C^*=(C^*-I)SC$, to explore two-operator inequalities involving $C$ and $C^*$. Either of the two second identities would suffice in this analysis; our aim was to simplify intermediate results.

In matrix form, the identity $(C-S^*)C^*=C$ may be written as
$$
{\scriptstyle\left(\begin{smallmatrix}  
\vphantom{\frac12}1&-1&&&&\\
\vphantom{\frac12}\frac12&\frac12&-1&&&\\
\vphantom{\frac12}\frac13&\frac13&\frac13&-1&&\\
\vphantom{\frac12}\frac14&\frac14&\frac14&\frac14&-1&\\
\vdots&\vdots&\vdots&\vdots&\vdots&\ddots\\
&\hphantom{-1}&\hphantom{-1}&\hphantom{-1}&\hphantom{-1}
\end{smallmatrix}\right)}
{\scriptstyle\left(\begin{smallmatrix}
\vphantom{\frac12}1&\frac12&\frac13&\frac14&\dots\\
\vphantom{\frac12}&\frac12&\frac13&\frac14&\dots\\
\vphantom{\frac12}&&\frac13&\frac14&\dots\\
\vphantom{\frac12}&&&\frac14&\dots\\
&&&&\ddots\\
&\hphantom{-1}&\hphantom{-1}&\hphantom{-1}&\hphantom{-1}
\end{smallmatrix}\right)}
={\scriptstyle\left(\begin{smallmatrix}
1&&&&\\
\vphantom{\frac12}\frac12&\frac12&&&\\
\vphantom{\frac12}\frac13&\frac13&\frac13&&\\
\vphantom{\frac12}\frac14&\frac14&\frac14&\frac14&\\
\vphantom{\frac12}\vdots&\vdots&\vdots&\vdots&\ddots\\
&\hphantom{-1}&\hphantom{-1}&\hphantom{-1}&\hphantom{-1}
\end{smallmatrix}\right)}.
$$
Viewed as an operator identity we can prove that it is valid on $\mathcal D(C^*)$, the domain of the operator $C^*$.

\begin{lemma}\label{firstId} If $x\in \mathcal D(C^*)$ then $(C-S^*)C^*x=Cx$.
\end{lemma}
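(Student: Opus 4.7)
The plan is to verify the identity pointwise, computing $(C-S^*)C^*x$ at index $n$ and showing it equals $(Cx)_n=\frac1n\sum_{j=1}^n x_j$. Let $y=C^*x$, so by assumption $y_k=\sum_{j=k}^\infty x_j/j$ converges in $\mathbb R$ for every $k$. I want to compute $\frac1n\sum_{k=1}^n y_k-y_{n+1}$.

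The key trick is to split the tail sum that defines $y_k$ at the finite index $n$, writing
\[
y_k=\sum_{j=k}^n \frac{x_j}{j}+\sum_{j=n+1}^\infty\frac{x_j}{j}=\sum_{j=k}^n \frac{x_j}{j}+y_{n+1}
\]
for each $k\le n$. (The tail starting at $n+1$ is the same for every $k\le n$; there is nothing to rearrange here, just a split of a convergent series into one of its finite initial pieces and one of its tails.) Summing over $k=1,\dots,n$ then gives
\[
\sum_{k=1}^n y_k = \sum_{k=1}^n\sum_{j=k}^n \frac{x_j}{j} + n\,y_{n+1}.
\]
The remaining double sum is finite, so I can freely interchange the order of summation to obtain $\sum_{j=1}^n\sum_{k=1}^j \frac{x_j}{j}=\sum_{j=1}^n x_j$.

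Dividing by $n$ and subtracting $y_{n+1}=(S^*y)_n$ yields
\[
((C-S^*)y)_n=\frac1n\sum_{k=1}^n y_k-y_{n+1}=\frac1n\sum_{j=1}^n x_j=(Cx)_n,
\]
which is the desired identity. No step requires absolute convergence beyond what is guaranteed by $x\in\mathcal D(C^*)$, since the only interchange of order is inside the finite double sum. The one point that needs care is the splitting step, where one must note that subtracting a convergent series from another convergent series is legitimate, so there is no real obstacle; the argument is essentially a bookkeeping exercise once the tail is separated.
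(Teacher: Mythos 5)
Your proof is correct and is essentially the same as the paper's: both rest on the identity $\sum_{j=k}^n x_j/j = y_k - y_{n+1}$ (splitting the convergent tail at $n$) together with an interchange of a finite double sum. The paper merely runs the computation in the other direction, starting from $(Cx)_n$ and arriving at $(Cy)_n - y_{n+1}$.
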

\begin{proof} Let $x\in \mathcal D(C^*)$ and set $y=C^*x$ to see that $(Cx)_n$ is equal to
$$
\frac1n\sum_{j=1}^n\frac{x_j}j\sum_{k=1}^j1=\frac1n\sum_{k=1}^n\sum_{j=k}^n\frac{x_j}j=\frac1n\sum_{k=1}^n(y_k -y_{n+1})=(Cy)_n-y_{n+1}.
$$
Therefore $(Cx)_n=((C-S^*)y)_n=((C-S^*)C^*x)_n$ for all $n$.
\end{proof}

The second identity is a bit more complicated because the matrix multiplication involves infinite sums and extra care has to be taken with the domain of the matrix operators.

In matrix form, the identity $(C^*-S)DE=C^*$ may be written as
$$
{\scriptstyle\left(\begin{smallmatrix}  
\vphantom{\frac12}1&\frac12&\frac13&\frac14&\dots\\
\vphantom{\frac12}-1&\frac12&\frac13&\frac14&\dots\\
\vphantom{\frac12}&-1&\frac13&\frac14&\dots\\
\vphantom{\frac12}&&-1&\frac14&\dots\\
\vphantom{\frac12}&&&\ddots&\ddots\\
&\hphantom{-1}&\hphantom{-1}&\hphantom{-1}&\hphantom{-1}
\end{smallmatrix}\right)}
{\scriptstyle\left(\begin{smallmatrix}  
\vphantom{\frac12}\frac12&&&&\\
\vphantom{\frac12}\frac13&\frac13&&&\\
\vphantom{\frac12}\frac14&\frac14&\frac14&&\\
\vphantom{\frac12}\frac15&\frac15&\frac15&\frac15&\\
\vdots&\vdots&\vdots&\vdots&\ddots\\
&\hphantom{-1}&\hphantom{-1}&\hphantom{-1}&\hphantom{-1}
\end{smallmatrix}\right)}
={\scriptstyle\left(\begin{smallmatrix}  
\vphantom{\frac12}1&\frac12&\frac13&\frac14&\dots\\
\vphantom{\frac12}&\frac12&\frac13&\frac14&\dots\\
\vphantom{\frac12}&&\frac13&\frac14&\dots\\
\vphantom{\frac12}&&&\frac14&\dots\\
&&&&\ddots\\
&\hphantom{-1}&\hphantom{-1}&\hphantom{-1}&\hphantom{-1}
\end{smallmatrix}\right)}.
$$
Next we show that it is also an operator identity on $\mathcal D(C^*)$.
\begin{lemma}\label{secondId} For $x\in\ell$, $x\in\mathcal D(C^*)$ if and only if 
\[
Ex\in\mathcal D(C^*D)\quad\text{and}\quad(DEx)_N\to0\text{ as }N\to\infty.
\]
In this case $(C^*-S)DEx=C^*x$.
\end{lemma}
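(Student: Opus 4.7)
The plan is to derive both directions of the equivalence and the operator identity simultaneously from a single summation-by-parts identity. Set $s_k=(Ex)_k=\sum_{j=1}^k x_j$ and $z_k=(DEx)_k=s_k/(k+1)$, with the convention $s_0=z_0=0$ (so that $(Sz)_1=z_0=0$ is consistent). Using the partial fraction $\frac{1}{k(k+1)}=\frac{1}{k}-\frac{1}{k+1}$, followed by reindexing and telescoping, a direct Abel-type calculation yields, for every $n\ge 1$ and every $N\ge n$,
\[
\sum_{k=n}^N \frac{s_k}{k(k+1)} \;=\; z_{n-1} \;+\; \sum_{k=n}^N \frac{x_k}{k} \;-\; z_N.
\]

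The three $N$-dependent quantities here are the partial sum defining $(C^*D(Ex))_n$, the partial sum defining $(C^*x)_n$, and the value $z_N=(DEx)_N$. Since any two of them converging as $N\to\infty$ forces the third to converge, the equivalence reduces to one nontrivial convergence statement: if $x\in\mathcal D(C^*)$ then $z_N\to 0$. For this Kronecker-type step, let $T_N=\sum_{k=1}^N x_k/k\to T$; a second summation by parts gives $s_N=NT_N-\sum_{k=1}^{N-1}T_k$, hence
\[
z_N=\frac{N}{N+1}T_N-\frac{1}{N+1}\sum_{k=1}^{N-1}T_k\longrightarrow T-T=0
\]
by Ces\`aro averaging. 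The backward direction is then immediate: convergence of the left side of the first display together with $z_N\to 0$ forces convergence of $\sum_k x_k/k$, i.e., $x\in\mathcal D(C^*)$.

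Finally, passing $N\to\infty$ in the first display under either (equivalent) hypothesis yields $(C^*D(Ex))_n=z_{n-1}+(C^*x)_n$ for every $n\ge 1$. Since $(SDEx)_n=z_{n-1}$ for all $n\ge 1$ (including $n=1$ by the convention $z_0=0$), this rearranges to $((C^*-S)DEx)_n=(C^*x)_n$, as required. The main obstacle is the Ces\`aro/Kronecker step showing $z_N\to 0$ whenever $x\in\mathcal D(C^*)$; everything else is a direct consequence of the single summation-by-parts identity.
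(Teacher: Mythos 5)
Your proof is correct and follows essentially the same route as the paper: your summation-by-parts identity is exactly the paper's identity \eqref{2C*} rearranged, and the deduction of both directions of the equivalence and of the operator identity from it is the same bookkeeping. The only point of divergence is the step showing $(DEx)_N\to0$ when $x\in\mathcal D(C^*)$: you prove the Kronecker-type statement directly via a second summation by parts and Ces\`aro averaging of $T_N$, whereas the paper deduces it from Lemma \ref{firstId} by writing $(Cx)_N=(Cy)_N-y_{N+1}$ with $y=C^*x$ and using that Ces\`aro averages of a null sequence are null --- the two computations are the same in substance.
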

\begin{proof} Fix a real sequence $x$ and a positive integer $n$. If $N>n$, then
$$
\sum_{j=1}^N x_j\Big(\frac1{N+1}+\sum_{k=\max(n,j)}^N\frac1{k(k+1)}\Big)
=(DEx)_N+\sum_{k=n}^N\frac1{k(k+1)}\sum_{j=1}^kx_j,
$$
which telescopes to 
\begin{equation}\label{2C*}
\sum_{j=n}^N\frac{x_j}j+\frac1n\sum_{j=1}^{n-1}x_j
=\sum_{j=1}^N\frac{x_j}{\max(n,j)}=(DEx)_N+\sum_{k=n}^N\frac{(DEx)_k}k.
\end{equation}
Suppose $Ex\in\mathcal D(C^*D)$ and $(DEx)_N\to0$ as $N\to\infty$. Then the right-hand side of \eqref{2C*} converges as $N\to\infty$. So does the left-hand side, so $x\in\mathcal D(C^*)$.

Conversely, suppose $x\in\mathcal D(C^*)$ and let $N\to\infty$ in \eqref{2C*}. Since the left-hand side converges, so does the right-hand side. Setting $y=C^*x$ we get $y_N\to0$. It follows that the averages $(Cy)_N\to0$ and the shifts $(S^*y)_N\to0$. Now Lemma \ref{firstId} shows that $(Cx)_N=(Cy)_N-(S^*y)_N\to0$. But $(DEx)_N=\frac N{N+1}(Cx)_N$ so $(DEx)_N\to0$. Since the first term of the right-hand side of \eqref{2C*} converges, so does the second term. It follows that $Ex\in\mathcal D(C^*D)$, which completes the equivalence. 

Letting $N\to\infty$, \eqref{2C*} becomes $(C^*x)_n+(SDEx)_n=(C^*DEx)_n$. Since $n$ was arbitrary, $(C^*-S)DEx=C^*x$.
\end{proof}

These two identities are the keys to proving the following two theorems that reduce inequalities relating $C$ and $C^*$ to inequalities involving a single operator.

\begin{theorem}\label{CC*reduction} Let $u,v\in\ell^+$ and $A\in[0,\infty)$. Then \eqref{CC*} if and only if \eqref{C-S*0}, and \eqref{CC*+} if and only if \eqref{C-S*d0}, where
\begin{align}
\|Cx\|\1v\le A\|C^*x\|\2u\quad&\mbox{for } x\in\mathcal D(C^*);\label{CC*}\\
\|(C-S^*)y\|\1v\le A\|y\|\2u\quad&\mbox{for } y\in\ell, y_n\to0;\label{C-S*0}\\
\|Cx\|\1v\le A\|C^*x\|\2u\quad&\mbox{for } x\in\ell^+\cap \mathcal D(C^*);\label{CC*+}\\
\|(C-S^*)y\|\1v\le A\|y\|\2u\quad&\mbox{for } y\in\ell^\downarrow, y_n\to0.\label{C-S*d0}
\end{align}
\end{theorem}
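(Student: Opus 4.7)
The plan is to prove both equivalences through a common device: establish a norm-preserving correspondence between sequences $x\in\mathcal D(C^*)$ and null sequences $y\in\ell$ (i.e., $y_n\to0$), and then use Lemma \ref{firstId} to identify $Cx$ with $(C-S^*)y$. Concretely, set $y=C^*x$ in one direction and $x_n=n(y_n-y_{n+1})$ in the other. Once this correspondence is in place, both sides of \eqref{CC*} match the corresponding sides of \eqref{C-S*0} exactly, since $\|C^*x\|\2u=\|y\|\2u$ and, by Lemma \ref{firstId}, $\|Cx\|\1v=\|(C-S^*)C^*x\|\1v=\|(C-S^*)y\|\1v$.

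For the direction $x\mapsto y$: given $x\in\mathcal D(C^*)$, the convergence of $\sum_{k=1}^\infty x_k/k$ in $\mathbb R$ forces the tails $y_n=\sum_{k=n}^\infty x_k/k$ to tend to $0$, so $y$ is a null sequence. This immediately yields \eqref{CC*}$\Rightarrow$\eqref{C-S*0} by applying \eqref{CC*} to $x$ and rewriting via the identity. For the direction $y\mapsto x$: given a null sequence $y\in\ell$, define $x_n=n(y_n-y_{n+1})$. The series $\sum_{k=n}^\infty x_k/k=\sum_{k=n}^\infty(y_k-y_{k+1})$ telescopes to $y_n-\lim_k y_k=y_n$, which shows simultaneously that $x\in\mathcal D(C^*)$ and that $C^*x=y$. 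Applying \eqref{C-S*0} to this $y$ (and rewriting $(C-S^*)y=Cx$) yields \eqref{CC*}, completing the first equivalence.

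For the second equivalence, I would verify that the same bijection restricts appropriately. If $x\in\ell^+\cap\mathcal D(C^*)$, then each tail $y_n=\sum_{k\ge n}x_k/k$ is nonnegative and nonincreasing in $n$, so $y\in\ell^\downarrow$ and $y_n\to0$. Conversely, if $y\in\ell^\downarrow$ with $y_n\to0$, then the differences $y_n-y_{n+1}\ge0$ yield $x_n=n(y_n-y_{n+1})\ge0$, so $x\in\ell^+\cap\mathcal D(C^*)$. Thus the correspondence restricts to a bijection between $\ell^+\cap\mathcal D(C^*)$ and $\{y\in\ell^\downarrow:y_n\to0\}$, and the same argument as above gives \eqref{CC*+}$\Leftrightarrow$\eqref{C-S*d0}.

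There is no genuine obstacle here beyond bookkeeping: the content is entirely in Lemma \ref{firstId} together with the telescoping identity $\sum_{k\ge n}(y_k-y_{k+1})=y_n$. The only points needing care are the null conditions — verifying that $C^*x$ produces a null sequence whenever $x\in\mathcal D(C^*)$, and that the null hypothesis on $y$ is what lets the telescoping sum land on $y_n$ rather than $y_n-\lim y_k$ — and the sign check that identifies $\ell^+$ on the $x$ side with $\ell^\downarrow$ on the $y$ side.
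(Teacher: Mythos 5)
Your proposal is correct and follows essentially the same route as the paper: the mutually inverse correspondence $y=C^*x$ and $x_n=n(y_n-y_{n+1})$, the telescoping computation showing $x\in\mathcal D(C^*)$ with $C^*x=y$, Lemma \ref{firstId} to identify $Cx$ with $(C-S^*)y$, and the sign check matching $\ell^+$ on the $x$ side with $\ell^\downarrow$ (with $y_n\to0$) on the $y$ side. The only quibble is that your implication labels are swapped --- the construction $x\mapsto y=C^*x$ is what proves \eqref{C-S*0}$\Rightarrow$\eqref{CC*}, while $y\mapsto x$ proves the converse --- but since you verify both halves of the norm-preserving bijection, the argument is complete either way.
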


\begin{proof} Let $x\in\mathcal D(C^*)$, set $y=C^*x$, and note that $y_n\to0$ as $n\to\infty$. If \eqref{C-S*0} holds, then by Lemma \ref{firstId},
\[
\|Cx\|\1v=\|(C-S^*)y\|\1v\le A\|y\|\2u=A\|C^*x\|\2u,
\]
so \eqref{CC*} holds. If \eqref{C-S*d0} holds and $x\in\ell^+\cap \mathcal D(C^*)$, then $y\in\ell^\downarrow$ so the same estimate gives \eqref{CC*+}.

Now let $y\in\ell$ with $y_n\to0$ and set $x_k=k(y_k-y_{k+1})$. We have
\[
\sum_{k=n}^N\frac{x_k}k=\sum_{k=n}^N(y_k-y_{k+1})=y_n-y_N\to y_n
\]
as $N\to\infty$. So $x\in\mathcal D(C^*)$ and $C^*x=y$. If \eqref{CC*} holds, then Lemma \ref{firstId} shows
\[
\|(C-S^*)y\|\1v=\|Cx\|\1v\le A\|C^*x\|\2u=A\|y\|\2u,
\]
so \eqref{C-S*0} holds. If \eqref{CC*+} holds, and $y\in\ell^\downarrow$ with $y_n\to0$, then $x\in\ell^+$ and the same estimate gives \eqref{C-S*d0}. 
\end{proof}

\begin{theorem}\label{C*Creduction} Let $u,v\in\ell^+$ and $A\in[0,\infty)$. Set $w_k=ku_k$ for each $k$. Then \eqref{C*C} if and only if \eqref{C*-S0}, and \eqref{C*C+} if and only if \eqref{C*-Su0}, where
\begin{align}
\|C^*x\|\1v\le A\|Cx\|\2u,\quad& x\in \mathcal D(C^*);\label{C*C}\\
\|(C^*-S)Dz\|\1v\le A\|z\|\2w,\quad&z\in \mathcal D(C^*D), (Dz)_n\to0;\label{C*-S0}\\
\|C^*x\|\1v\le A\|Cx\|\2u,\quad&x\in \ell^+\cap\mathcal D(C^*);\label{C*C+}\\
\|(C^*-S)Dz\|\1v\le A\|z\|\2w,\quad& z\in \ell^\uparrow\cap\mathcal D(C^*D), (Dz)_n\to0.\label{C*-Su0}
\end{align}
\end{theorem}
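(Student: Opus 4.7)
The plan is to mirror the argument of Theorem~\ref{CC*reduction}, trading the first identity for Lemma~\ref{secondId}. The crucial substitution is $z=Ex$, i.e.\ $z_n=\sum_{k=1}^n x_k$ with inverse $x_n=z_n-z_{n-1}$ (using $z_0=0$). Under this substitution three things match up: first, $(Cx)_n=z_n/n$, so that
\[
\|Cx\|\2u=\sup_n\frac{|z_n|}{nu_n}=\|z\|\2w
\]
with $w_k=ku_k$; second, by Lemma~\ref{secondId}, $C^*x=(C^*-S)DEx=(C^*-S)Dz$, so the norm $\|C^*x\|\1v$ becomes $\|(C^*-S)Dz\|\1v$; and third, $x\in\ell^+$ if and only if $z\in\ell^\uparrow$, since partial sums of nonnegative terms are exactly the nondecreasing sequences starting from $0$.

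For the forward direction (\eqref{C*C} $\Rightarrow$ \eqref{C*-S0}), I would take $z\in\mathcal D(C^*D)$ with $(Dz)_n\to0$ and define $x_n=z_n-z_{n-1}$. Then $Ex=z\in\mathcal D(C^*D)$ and $(DEx)_n=(Dz)_n\to0$, so Lemma~\ref{secondId} gives $x\in\mathcal D(C^*)$ together with $C^*x=(C^*-S)Dz$. Applying \eqref{C*C} and translating both sides via the identifications above yields \eqref{C*-S0}. For the cone version, if $z\in\ell^\uparrow$, then $x\in\ell^+$, so the same chain with \eqref{C*C+} in place of \eqref{C*C} gives \eqref{C*-Su0}.

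For the converse (\eqref{C*-S0} $\Rightarrow$ \eqref{C*C}), given $x\in\mathcal D(C^*)$, I set $z=Ex$. Lemma~\ref{secondId} now guarantees that $z\in\mathcal D(C^*D)$ with $(Dz)_N=(DEx)_N\to0$, so $z$ is admissible for \eqref{C*-S0}, and the identity $C^*x=(C^*-S)Dz$ again holds. Combining with $\|Cx\|\2u=\|z\|\2w$ gives \eqref{C*C}. For the restricted statement, $x\in\ell^+\cap\mathcal D(C^*)$ forces $z\in\ell^\uparrow$, so the same computation with \eqref{C*-Su0} in place of \eqref{C*-S0} produces \eqref{C*C+}.

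The only delicate point, and the step I expect to be the main obstacle, is domain bookkeeping: making sure that the side conditions $z\in\mathcal D(C^*D)$ and $(Dz)_n\to 0$ match up correctly with $x\in\mathcal D(C^*)$ in both directions. This is precisely the content of the biconditional part of Lemma~\ref{secondId}, so once that lemma is in hand the argument reduces to the bookkeeping described above and the two straightforward norm translations.
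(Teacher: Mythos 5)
Your proposal is correct and follows essentially the same route as the paper: the substitution $z=Ex$ with inverse $x_k=z_k-z_{k-1}$, the norm identity $\|Cx\|\2u=\|z\|\2w$, the correspondence $x\in\ell^+\Leftrightarrow z\in\ell^\uparrow$, and Lemma~\ref{secondId} to handle both the operator identity and the domain bookkeeping in each direction. No gaps.
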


Proof. Let $x\in\mathcal D(C^*)$ and set $z=Ex$.  The definition of $E$ shows $\|z\|\2w=\|Cx\|\2u$. From Lemma \ref{secondId} we get $(Dz)_n\to0$, $z\in\mathcal D(C^*D)$ and $C^*x=(C^*-S)Dz$. If \eqref{C*-S0} holds, then
$$
\|C^*x\|\1v=\|(C^*-S)Dz\|\1v\le A\|z\|\2w=A\|Cx\|\2u,
$$
that is, \eqref{C*C} holds. If $x\in \ell^+$ then $z\in\ell^\uparrow$ so the same estimate shows that if \eqref{C*-Su0} holds, so does \eqref{C*C+}.

Now let $z\in\mathcal D(C^*D)$ with $(Dz)_N\to0$ as $N\to\infty$, set $z_0=0$ and $x_k=z_k-z_{k-1}$ for all $k$. Then $z=Ex$ so Lemma \ref{secondId} shows that $x\in\mathcal D(C^*)$ and $(C^*-S)Dz=C^*x$. If \eqref{C*C} holds, then 
$$
\|(C^*-S)Dz\|\1v=\|C^*x\|\1v\le A\|Cx\|\2u=A\|z\|\2w,
$$
so \eqref{C*-S0} holds. If \eqref{C*C+} holds and $z\in\ell^\uparrow$, then $x\in\ell^+$ so the same estimate gives \eqref{C*-Su0}. This completes the proof.

\section{Operator norms for some matrix operators on cones} 
 
The simple form of weighted $\ell^\infty$ norms permits direct computation of the norms of matrix operators from one weighted space to another and from the positive cone of one weighted space to another. For the cones of decreasing sequences and increasing sequences, the situation is more delicate but for each of these cones we identify a class of matrix operators for which it simplifies nicely. The operators involved in our analysis of the inequalities in \eqref{CandC*} are in those classes.

\begin{definition} Let $b\in \ell$. We say that $b$ has \emph{positives before negatives} provided that for all $j, k\in\mathbb Z^+$, $b_j>0>b_k$ only if $j<k$. We say that $b$ has \emph{negatives before positives} if $-b$ has positives before negatives.
 
Note that if $b$ has positives before negatives or has negatives before positives then $\sum_{k=1}^Nb_k$ is a monotone function of $N$ for sufficiently large $N$ so the sum $\sum_{k=1}^\infty b_k$, exists in $[-\infty,\infty]$. We call it the \emph{sum of $b$}.
\end{definition}
Let $u,v\in\ell^+$. For a matrix $B$, let $A(B)$, $A^+(B)$, $A^\downarrow(B)$ and $A^\uparrow(B)$ denote the smallest constant $A\in[0,\infty]$ such that inequality
\begin{equation}\label{B}
\|Bx\|\1{v}\le A\|x\|\2u
\end{equation}
holds for all $x\in\mathcal D(B)$, $x\in\ell^+\cap\mathcal D(B)$, $x\in\ell^\downarrow\cap\mathcal D(B)$, and $x\in\ell^\uparrow\cap\mathcal D(B)$, respectively.

\begin{remark}\label{preproc} Multiplying the matrix $B$ on the left by a complex diagonal matrix has no effect on the left-hand side of \eqref{B}, provided the weight sequence $v$ is adjusted appropriately. This simple observation substantially extends the applicability of the next theorem. Rather than unduly complicate its statement, we trust that, in applications, suitable row-by-row ``preprocessing'' will have been carried out to ensure that the hypotheses of the theorem are satisfied. One simple form of this preprocessing allows some subset of the rows of a real matrix $B$ to be multiplied by $-1$ to permit the use of parts \ref{p3} or \ref{p4} of the theorem.
\end{remark}

Some expressions in what follows need to be understood according to the convention $\infty\cdot0=0$.
\begin{theorem}\label{NormOfB} Let $B$ be a matrix with real entries.
\begin{enumerate}[label=(\roman*)]
\item\label{p1} The least $A\in [0,\infty]$ such that \eqref{B} holds
 for all $x\in\mathcal D(B)$ is 
 \[
 A(B)=\||B|u\|\1v.
 \]
\item\label{p2} The least $A\in [0,\infty]$ such that \eqref{B} holds
 for all 
$ x\in\ell^+\cap\mathcal D(B)$ is 
\[
A^+(B)=\max(\|B^+u\|\1v,\|B^-u\|\1v).
\]
\item\label{p3} Suppose each row of $B$ has positives before negatives and a nonnegative sum. The least $A\in [0,\infty]$ such that \eqref{B} holds
 for all $x\in\ell^\downarrow\cap\mathcal D(B)$ is 
 \[
 A^\downarrow(B)=\|B^+(u^\downarrow)\|\1v.
\]
\item\label{p4} Suppose each row of $B$ has negatives before positives and a nonnegative sum. If each row of $B$ has a finite sum then the least $A\in [0,\infty]$ such that \eqref{B} holds for all $x\in\ell^\uparrow\cap\mathcal D(B)$ is 
\[
A^\uparrow(B)=\|B^+(u^\uparrow)\|\1v.
\]
If some row of $B$ has an infinite sum, then $\ell^\uparrow\cap\mathcal D(B)=\{0\}$ so \eqref{B} holds trivially with $A^\uparrow(B)=0$.
\end{enumerate}
\end{theorem}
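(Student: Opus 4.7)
Parts (i) and (ii) are direct. For (i), the upper bound $\|Bx\|\1v\le\||B|u\|\1v\|x\|\2u$ follows row by row from the triangle inequality, and the matching lower bound is reached in the limit by the finitely supported test sequences $x^{(N)}_k=u_k\,\mathrm{sgn}(b_{n,k})$ for $k\le N$ and $0$ otherwise: each has $\|x^{(N)}\|\2u\le1$ and $(Bx^{(N)})_n\to(|B|u)_n$. For (ii), with $x\ge0$, the splitting $(Bx)_n=(B^+x)_n-(B^-x)_n$ combined with the elementary bound $|a-b|\le\max(a,b)$ for $a,b\ge0$ yields the upper bound, while truncated test sequences supported separately on $\{k\le N:b_{n,k}\ge0\}$ and $\{k\le N:b_{n,k}\le0\}$ realize $\|B^+u\|\1v$ and $\|B^-u\|\1v$ in the limit.

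For part (iii), fix a row $n$ and let $m_n$ be the largest index with $b_{n,m_n}>0$; by the positives-before-negatives hypothesis, $b^+_{n,k}=0$ for $k>m_n$ and $b^-_{n,k}=0$ for $k\le m_n$. (If $m_n=\infty$, the argument simplifies and a truncation handles the limit.) The algebraic identity
\[
(Bx)_n=x_{m_n}R_n+\sum_{k\le m_n}b^+_{n,k}(x_k-x_{m_n})+\sum_{k>m_n}b^-_{n,k}(x_{m_n}-x_k),
\]
where $R_n\ge0$ is the row sum, exhibits three nonnegative pieces by the monotonicity of $x$ and the fact that $x_{m_n}\ge0$, so $(Bx)_n\ge0$. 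Dropping the nonpositive term $-\sum_{k>m_n}b^-_{n,k}x_k$ and invoking Lemma \ref{downandup}(iii) to conclude $x\le u^\downarrow$ when $\|x\|\2u\le1$ then yields $(Bx)_n\le(B^+u^\downarrow)_n$. The matching lower bound is realized by the finitely supported, nonincreasing test sequence $x_k=u^\downarrow_k$ for $k\le m_n$ and $0$ otherwise, which has $\|x\|\2u\le1$ and $(Bx)_n=(B^+u^\downarrow)_n$ exactly.

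Part (iv) is the main technical obstacle because any finitely supported nondecreasing sequence is identically zero, forcing the test sequences to have infinite support while remaining in $\mathcal D(B)$. I would first dispose of the case that some row $N$ has infinite sum: for $x\in\ell^\uparrow$ not identically $0$, $x_k\ge\varepsilon>0$ eventually, and combined with the infinite tail of $\sum_k b^+_{N,k}$ this forces the partial sums of $\sum_k b_{N,k}x_k$ to diverge, so $\ell^\uparrow\cap\mathcal D(B)=\{0\}$. When every row has finite sum, the upper bound mimics (iii) with pivot $x_{m_n+1}$ (now $m_n$ the last negative index) and Lemma \ref{downandup}(iv), giving $0\le(Bx)_n\le(B^+u^\uparrow)_n$. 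For the matching lower bound I would use
\[
x^{(M)}_k=\begin{cases}0,&k\le m_n,\\ u^\uparrow_k,&m_n<k\le M,\\ u^\uparrow_M,&k>M,\end{cases}
\]
which is nondecreasing, bounded by $u^\uparrow$, and lies in $\mathcal D(B)$ because each row of $B$ is absolutely summable in the finite-row-sum case. The crux is the pointwise bound $u^\uparrow_M\le u^\uparrow_k$ for $k\ge M$, giving $u^\uparrow_M\sum_{k>M}b^+_{n,k}\le\sum_{k>M}b^+_{n,k}u^\uparrow_k$, a tail of the series defining $(B^+u^\uparrow)_n$; hence this remainder tends to $0$ whenever $(B^+u^\uparrow)_n$ is finite, so $(Bx^{(M)})_n\to(B^+u^\uparrow)_n$, and in the divergent case the first piece of $(Bx^{(M)})_n$ already tends to $\infty$.
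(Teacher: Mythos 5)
Your proposal is correct and follows essentially the same route as the paper's proof: the same truncated sign-pattern test sequences for (i) and (ii), the same pivot-index argument showing $(Bx)_n\ge0$ on the monotone cones combined with Lemma \ref{downandup}, and the same disposal of the infinite-row-sum case in (iv). The only cosmetic differences are that you package the pivot inequality in (iii) as an explicit three-term identity, and in (iv) you cap the test sequence at an index $M$ with constant continuation rather than capping its values at a level $P$ as the paper does; both devices serve the same purpose.
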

\begin{proof} Let $\rho^+$ be the result of replacing all nonzero entries of $B^+$ by $1$ and let $\rho^-$ be the result of replacing all nonzero entries of $B^-$ by $1$. Then  $b_{n,k}\rho_{n,k}^+=b^+_{n,k}$ and $b_{n,k}\rho^-_{n,k}=-b^-_{n,k}$ for all $n$ and $k$.

If $x\in\mathcal D(B)$ and $|x|\le u$, then $\|x\|\2u\le1$ and, for each $n$, $|(Bx)_n|\le (|B||x|)_n\le(|B|u)_n$ so we get $\|Bx\|\1v\le\||B|u\|\1v$. By positive homogeneity of the norm, $A(B)\le\||B|u\|\1v$. 

Fix $n$ and $K$. Define a sequence $x$ by setting $x_k=(\rho^+-\rho^-)_{n,k}u_k$ if $k\le K$ and $x_k=0$ if $k>K$. Then $x\in\mathcal D(B)$ and $\|x\|\2u\le1$. Since $|b_{n,k}|=b_{n,k}(\rho^+-\rho^-)_{n,k}$ for all $k$, we have
\[
v_n\sum_{k=1}^K|b_{n,k}|u_k
=v_n(Bx)_n\le\|Bx\|\1v\le A(B).
\]
Letting $K\to\infty$ and taking the supremum over all $n$ we get
\[
\||B|u\|\1v=\sup_n v_n(|B|u)_n\le A(B).
\]
This proves \ref{p1}.

If $x\in\ell^+\cap\mathcal D(B)$ and $x\le u$, then $\|x\|\2u\le1$ and, for each $n$, $0\le(B^+x)_n\le (B^+u)_n$ and $0\le(B^-x)_n\le (B^-u)_n$. If $(B^+u)_n$ and $(B^-u)_n$ are both finite, then 
\[
|(Bx)_n|=|(B^+x)_n-(B^-x)_n|\le \max((B^+u)_n,(B^-u)_n),
\]
an inequality that also holds if $(B^+u)_n=\infty$ or $(B^-u)_n=\infty$. 
Multiplying both sides by $v_n$, taking the supremum over all $n$, and interchanging the supremum and the maximum, yields
\[
\|Bx\|\1v\le\max(\|B^+u\|\1v, \|B^-u\|\1v).
\]
By positive homogeneity of the norm, 
\[
A^+(B)\le\max(\|B^+u\|\1v, \|B^-u\|\1v).
\] 

Fix $n$ and $K$. First, define a sequence $x$ by setting $x_k=\rho^+_{n,k}u_k$ if $k\le K$ and $x_k=0$ if $k>K$. Then $x\in\mathcal D(B)$ and $\|x\|\2u\le1$. Since $b^+_{n,k}=b_{n,k}\rho^+_{n,k}$,
\[
v_n\Big|\sum_{k=1}^Kb_{n,k}^+ u_k\Big|
=v_n|(Bx)_n|\le\|Bx\|\1v\le A^+(B).
\]
Next, define a sequence $x$ by setting $x_k=\rho^-_{n,k}u_k$ if $k\le K$ and $x_k=0$ if $k>K$. Then $x\in\mathcal D(B)$ and $\|x\|\2u\le1$. Since $b^-_{n,k}=-b_{n,k}\rho^-_{n,k}$,
\[
v_n\Big|\sum_{k=1}^Kb_{n,k}^- u_k\Big|
=v_n|(Bx)_n|\le\|Bx\|\1v\le A^+(B).
\]
Letting $K\to\infty$ and taking the supremum over all $n$ in the two estimates above, we get 
\[
\max(\|B^+u\|\1v, \|B^-u\|\1v)\le A^+(B).
\]
This proves \ref{p2}.

To prove \ref{p3}, suppose each row of $B$ has positives before negatives and a nonnegative sum. Fix $n$ and set $m=\sup\{k:b_{n,k}>0\}$, taking $\sup\emptyset=0$ if necessary. If $m=0$, then $b_{n,k}\le0$ for all $k$, but the $n$th row of $B$ has a nonnegative sum so $b_{n,k}=0$ for all $k$. If $m=\infty$, then $b_{n,k}\ge0$ for all $k$ because the $n$th row of $B$ has positives before negatives. In the remaining case, $m\in\mathbb Z^+$, $b_{n,k}\ge0$ for $k\le m$ and $b_{n,k}\le0$ for $k>m$. This implies that for all $x\in\ell^\downarrow\cap\mathcal D(B)$, $b_{n,k}x_k\ge b_{n,k}x_m$ for all $k$ and so $(Bx)_n\ge x_m\sum_{k=1}^\infty b_{n,k}\ge0$, because the $n$th row of $B$ has a nonnegative sum. In all three cases we get $(Bx)_n\ge0$.

If $x\in\ell^\downarrow\cap\mathcal D(B)$ and $x\le u$, then $x\le u^\downarrow$ by Lemma \ref{downandup}. Therefore,
$$
v_n|(Bx)_n|=v_n(Bx)_n\le v_n(B^+x)_n\le v_n(B^+(u^\downarrow))_n.
$$
Taking the supremum over all $n$, we get $\|Bx\|\1v\le\|B^+(u^\downarrow)\|\1v$. Positive homogeneity of the norm shows that 
$A^\downarrow(B)\le\|B^+(u^\downarrow)\|\1v$.

Fix $n$ and $K$, and define $m$ as above. Define $x$ by setting $x_k=u_k^\downarrow$ if $k\le \min(m,K)$ and $x_k=0$ otherwise. Then $x\in\ell^{\downarrow}\cap\mathcal D(B)$ and, by Lemma \ref{downandup}, $\|x\|\2u=\|x\|\2{u^\downarrow}\le1$. We have seen that $b^+_{n,k}=b_{n,k}$ for $k\le m$ and $b^+_{n,k}=0$ for $k>m$. Therefore,
\[
v_n\sum_{k=1}^Kb_{n,k}^+ u_k^\downarrow
=v_n(Bx)_n\le\|Bx\|\1v\le A^\downarrow(B).
\]
Letting $K\to\infty$ we get 
\[\| B^+(u^\downarrow)\|\1v\le A^\downarrow(B).
\]

To prove \ref{p4}, suppose each row of $B$ has negatives before positives and a nonnegative sum. Fix $n$ and set $m=\sup\{k:b_{n,k}<0\}$, taking $\sup\emptyset=0$ if necessary. If $m=\infty$, then $b_{n,k}\le0$ for all $k$ because the $n$th row of $B$ has negatives before positives, but the $n$th row of $B$ has a nonnegative sum so $b_{n,k}=0$ for all $k$. If $m=0$, then $b_{n,k}\ge0$ for all $k$. In the remaining case, $m\in\mathbb Z^+$, $b_{n,k}\le0$ for $k\le m$ and $b_{n,k}\ge0$ for $k>m$. This implies that for all $x\in\ell^\uparrow\cap\mathcal D(B)$, $b_{n,k}x_k\ge b_{n,k}x_m$ for all $k$ and so $(Bx)_n\ge x_m\sum_{k=1}^\infty b_{n,k}\ge0$, because the $n$th row of $B$ has a nonnegative sum. In all three cases we get $(Bx)_n\ge0$.

If $x\in\ell^\uparrow\cap\mathcal D(B)$ and $x\le u$, then $x\le u^\uparrow$ by Lemma \ref{downandup}. Therefore,
$$
v_n|(Bx)_n|=v_n(Bx)_n\le v_n(B^+x)_n\le v_n(B^+(u^\uparrow))_n.
$$
Taking the supremum over all $n$, we get $\|Bx\|\1v\le\|B^+(u^\uparrow)\|\1v$. Positive homogeneity of the norm shows that 
$A^\uparrow(B)\le\|B^+(u^\uparrow)\|\1v$.

Case 1. Every row of $B$ has a finite (nonnegative) sum. First we show that $\mathcal D(B)$ contains every nonnegative, bounded sequence. Suppose $x$ is such a sequence and choose $P$ so that $0\le x_k\le P$ for all $k$. Fix $n$ and let $m=\sup\{k:b_{n,k}<0\}$ again. As we have seen, if $m=\infty$, then $b_{n,k}=0$ for all $k$, so $\sum_{k=1}^\infty b_{n,k}x_k$ is trivially convergent. Otherwise, $\sum_{k=1}^Kb_{n,k}x_k$ is a nondecreasing for $K>m$ and is bounded above by
\[
\sum_{k=1}^mb_{n,k}x_k + P\sum_{k=m+1}^\infty b_{n,k}<\infty.
\] 
Again, $\sum_{k=1}^\infty b_{n,k}x_k$ is convergent. Since $n$ was arbitrary, $x\in\mathcal D(B)$.

Now fix $n$ and a real number $P$. Let $m=\sup\{k:b_{n,k}<0\}$. Define $x$ by setting $x_k=0$ if $k\le m$ and $x_k=\min(u_k^\uparrow,P)$ if $k> m$. Since $x$ is bounded above by $P$, $x\in\ell^\uparrow\cap\mathcal D(B)$. Moreover, $|x|\le u^\uparrow$ so $\|x\|\2u=\|x\|\2{u\uparrow}\le 1$ by Lemma \ref{downandup}. Therefore,
$$
v_n\sum_{k=1}^\infty b_{n,k}^+\min(u_k^\uparrow, P)=v_n\sum_{k=1}^\infty b_{n,k}x_k\le\|Bx\|\1v\le A^\uparrow(B).
$$
Taking the limit as $P\to\infty$, we get 
\[
v_n\sum_{k=1}^\infty b_{n,k}^+u_k^\uparrow \le A^\uparrow(B).
\]
Taking the supremum over all $n$, we get $\|B^+(u^\uparrow)\|\1v\le A^\uparrow(B)$.

Case 2. For some $n$, the $n$th row of $B$ has an infinite sum. Since its sum is nonnegative by hypothesis, the sum is $\infty$. Since this row has negatives before positives, there exists an $m$ such that $b_{n,k}\ge0$ when $k\ge m$. If $x\in \ell^\uparrow$ is not the zero sequence, then there exists a $K\ge m$ such that $x_K>0$. Therefore,
\[
\sum_{k=K}^\infty b_{n,k}x_k \ge x_K\sum_{k=K}^\infty b_{n,k}=\infty.
\]
Thus, $x\notin \mathcal D(B)$. We conclude that $\ell^\uparrow\cap\mathcal D(B)$ contains only the zero sequence, so $A^\uparrow(B)=0$.
\end{proof}

\subsection{The Ces\`aro and Copson operators}\label{specificsCC*}

The Ces\`aro matrix $C$ is nonnegative so all four parts of Theorem \ref{NormOfB} apply.
\begin{corollary}\label{SpecC}  Let $u,v\in\ell^+$. The inequality 
\[
\sup_n\Big|\frac1n\sum_{k=1}^nx_k\Big|v_n\le A\sup_k\frac{|x_k|}{u_k}
\] 
holds for all real sequences $x$ with $A=A(C)$; for all nonnegative sequences $x$ with $A=A^+(C)$; for all nonnegative, nonincreasing sequences $x$ with $A=A^\downarrow(C)$; and for all nonnegative, nondecreasing sequences $x$ with $A=A^\uparrow(C)$. In each case the constant $A$ is best possible. Here
\begin{align*}
A(C)=A^+(C)&=\|Cu\|\1v=
\sup_n\frac{v_n}n\sum_{k=1}^nu_k;\\
A^\downarrow(C)&=\|C(u^\downarrow)\|\1v=
\sup_n\frac{v_n}n\sum_{k=1}^n\min_{j\le k}u_j;\\
A^\uparrow(C)&=\|C(u^\uparrow)\|\1v=
\sup_n\frac{v_n}n\sum_{k=1}^n\inf_{j\ge k}u_j.
\end{align*}
\end{corollary}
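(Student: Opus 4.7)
The plan is to read off all four constants as direct applications of Theorem \ref{NormOfB} to the matrix $B=C$. The feature that makes this work cleanly is that every entry of $C$ is nonnegative, which collapses $|C|$ and $C^+$ into $C$, forces $C^-=0$, and trivializes the structural hypotheses about ``positives before negatives'' and ``negatives before positives'' in parts \ref{p3} and \ref{p4}.

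First I would handle parts \ref{p1} and \ref{p2}. With $|C|=C^+=C$ and $C^-=0$, part \ref{p1} immediately gives $A(C)=\||C|u\|\1v=\|Cu\|\1v$, and part \ref{p2} gives $A^+(C)=\max(\|Cu\|\1v,\|0\|\1v)=\|Cu\|\1v$. Expanding $(Cu)_n=\frac{1}{n}\sum_{k=1}^n u_k$ and multiplying by $v_n$ before taking the supremum in $n$ produces the first displayed formula. Next I would check the hypotheses of parts \ref{p3} and \ref{p4}: each row of $C$ has no negative entries, so it vacuously has positives before negatives and also vacuously has negatives before positives; each row sum equals $1$, which is finite and nonnegative. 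Both parts therefore apply and give $A^\downarrow(C)=\|C^+(u^\downarrow)\|\1v=\|C(u^\downarrow)\|\1v$ and $A^\uparrow(C)=\|C^+(u^\uparrow)\|\1v=\|C(u^\uparrow)\|\1v$; expanding the matrix action on $u^\downarrow$ and on $u^\uparrow$ yields the last two displayed suprema.

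The only piece of bookkeeping worth mentioning is that $\mathcal D(C)=\ell$, since each row of $C$ is finitely supported, so the inequalities genuinely hold for all real sequences (respectively, for all members of the three cones) with no hidden domain restriction. I do not anticipate any real obstacle here: the substantive content is already packaged inside Theorem \ref{NormOfB}, and what remains is just verifying the hypotheses and writing out the matrix-vector products.
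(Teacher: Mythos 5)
Your proposal is correct and follows exactly the route the paper intends: the corollary is stated as an immediate consequence of Theorem \ref{NormOfB}, justified only by the remark that $C$ is nonnegative, and your verification that $|C|=C^+=C$, $C^-=0$, each row sum equals $1$, the monotonicity hypotheses hold vacuously, and $\mathcal D(C)=\ell$ supplies precisely the omitted details. Nothing is missing.
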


Even using the formulas from this corollary, the operator norms of $C$ as a map on cones in a power weighted $\ell^\infty$ space requires some work to simplify. This is done in the next theorem.
\begin{theorem}\label{SpecCpower} Let $\alpha\in\mathbb R$. The inequality 
\[
\sup_n\Big|\frac1n\sum_{k=1}^nx_k\Big|n^\alpha\le A\sup_k|x_k|k^\alpha
\] 
holds for all real sequences $x$ if and only if it holds for all nonnegative sequences $x$ if and only if it holds for all nonnegative, nonincreasing sequences $x$. In this case the best constant $A$ is
\[
A
=\begin{cases}
1,&\alpha<0;\\
\dfrac1{1-\alpha},&0\le\alpha<1;\\
\infty,&\alpha\ge1.\end{cases}
\]
The inequality holds for all nonnegative, nondecreasing sequences $x$ with best constant 
\[
A
=\begin{cases}1,&\alpha\le0;\\0,&\alpha>0.\end{cases}
\]
\end{theorem}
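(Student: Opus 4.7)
The plan is to specialize Corollary \ref{SpecC} to $u_k=k^{-\alpha}$ and $v_n=n^\alpha$, so that the norms in the statement match $\|x\|_{\ell^\infty(1/u)}$ and $\|y\|_{\ell^\infty(v)}$. The corollary immediately gives
\[
A(C)=A^+(C)=\sup_n n^{\alpha-1}\sum_{k=1}^n k^{-\alpha},\quad
A^\downarrow(C)=\sup_n n^{\alpha-1}\sum_{k=1}^n\min_{j\le k}j^{-\alpha},
\]
and $A^\uparrow(C)=\sup_n n^{\alpha-1}\sum_{k=1}^n\inf_{j\ge k}j^{-\alpha}$. So the whole theorem reduces to evaluating three explicit supremums as functions of $\alpha$.

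For the first three cones, I would collapse them by splitting on the sign of $\alpha$. When $\alpha\ge0$, the sequence $k\mapsto k^{-\alpha}$ is nonincreasing, so $u^\downarrow=u$ and $A^\downarrow(C)=A^+(C)$ automatically. When $\alpha<0$, the sequence is nondecreasing, so $u^\downarrow$ is the constant sequence $1$, giving $A^\downarrow(C)=\sup_n n^\alpha=1$. For $A^+(C)$ in this regime, the trivial bound $k^{-\alpha}\le n^{-\alpha}$ for $k\le n$ yields $n^{\alpha-1}\sum_{k=1}^n k^{-\alpha}\le1$ with equality at $n=1$, so $A^+(C)=A^\downarrow(C)=1$.

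The main calculation is the case $0\le\alpha<1$. For $\alpha=0$ the sup is $1=1/(1-0)$, immediate. For $0<\alpha<1$, I would use the integral comparison for the decreasing function $x\mapsto x^{-\alpha}$:
\[
\sum_{k=1}^n k^{-\alpha}\le 1+\int_1^n x^{-\alpha}\,dx=1+\frac{n^{1-\alpha}-1}{1-\alpha},
\]
which after multiplying by $n^{\alpha-1}$ rearranges to
\[
n^{\alpha-1}\sum_{k=1}^n k^{-\alpha}\le \frac1{1-\alpha}-\frac{\alpha}{1-\alpha}n^{\alpha-1}<\frac1{1-\alpha}.
\]
The matching lower bound is obtained by letting $n\to\infty$, since $n^{\alpha-1}\sum_{k=1}^n k^{-\alpha}\to 1/(1-\alpha)$ by either an Abel/Stolz argument or by the companion integral bound from below. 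For $\alpha\ge1$, either $\sum k^{-\alpha}$ diverges (the case $\alpha=1$, logarithmically) or converges to a positive constant while $n^{\alpha-1}\to\infty$; in both cases the supremum is $\infty$.

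The nondecreasing cone is handled by the same case split applied to $u^\uparrow$. When $\alpha>0$, $j^{-\alpha}\to0$ so $u^\uparrow\equiv0$, forcing $A^\uparrow(C)=0$. When $\alpha=0$, $u^\uparrow=u\equiv1$ and $A^\uparrow(C)=1$. When $\alpha<0$, $u^\uparrow=u$, and the calculation coincides with that of $A^+(C)$ already done, giving $A^\uparrow(C)=1$. The only delicate point in the whole proof is the sharp evaluation for $0<\alpha<1$, where the value $1/(1-\alpha)$ is attained only as $n\to\infty$; the integral comparison above is precisely what makes the upper bound tight.
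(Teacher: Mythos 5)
Your proposal is correct and follows the same overall route as the paper: both reduce the theorem to Corollary \ref{SpecC} with $u_k=k^{-\alpha}$, $v_n=n^\alpha$ and then evaluate the three resulting suprema by splitting on the sign of $\alpha$. The one genuine difference is the tool used to evaluate $\sup_n n^{\alpha-1}\sum_{k=1}^n k^{-\alpha}$. The paper invokes Proposition 3 of \cite{BJ}, which says this quantity is monotone in $n$ (increasing for $\alpha\ge0$, decreasing for $\alpha\le0$); monotonicity immediately locates the supremum at $n=1$ or at $n\to\infty$ and identifies the limit as the Riemann sum of $\int_0^1x^{-\alpha}\,dx$. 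You instead prove the two-sided estimate directly by integral comparison: the upper bound $n^{\alpha-1}\sum_{k=1}^nk^{-\alpha}\le\frac1{1-\alpha}-\frac{\alpha}{1-\alpha}n^{\alpha-1}<\frac1{1-\alpha}$ for $0<\alpha<1$, and the matching limit from below. Your computation checks out, and it has the advantage of being self-contained (no appeal to the Bennett--Jameson monotonicity result), at the cost of having to argue the upper and lower bounds separately rather than getting both from a single monotonicity statement. The remaining cases ($\alpha<0$ via the crude bound $k^{-\alpha}\le n^{-\alpha}$ with equality at $n=1$, $\alpha\ge1$ via divergence, and the nondecreasing cone via $u^\uparrow$) agree with the paper's treatment.
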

\begin{proof} Take $u_k=k^{-\alpha}$ and $v_n=n^\alpha$ in Corollary \ref{SpecC} to get 
\begin{align*}
A(C)=A^+(C)&=\sup_nn^{\alpha-1}\sum_{k=1}^nk^{-\alpha};\\
A^\downarrow(C)&=
\sup_nn^{\alpha-1}\sum_{k=1}^n\min_{j\le k}j^{-\alpha};\\
A^\uparrow(C)&=
\sup_nn^{\alpha-1}\sum_{k=1}^n\inf_{j\ge k}j^{-\alpha}.
\end{align*}
We will make use of Proposition 3 of \cite{BJ}, which shows that 
 \[
n^{\alpha-1}\sum_{k=1}^nk^{-\alpha}
\]
increases with $n$ when $\alpha\ge0$ and decreases with $n$ when $\alpha\le0$. If $\alpha\ge0$, then $\min_{j\le k}j^{-\alpha}=k^{-\alpha}$ so $A(C)=A^+(C)=A^\downarrow(C)$ and their common value is
\[
\lim_{n\to\infty}\frac1n\sum_{k=1}^n\Big(\frac kn\Big)^{-\alpha}=\int_0^1x^{-\alpha}\,dx=\begin{cases}
\dfrac1{1-\alpha},&0\le\alpha<1;\\
\infty,&\alpha\ge1.\end{cases}
\]
If $\alpha<0$, then $\min_{j\le k}j^{-\alpha}=1$ so
\[
A(C)=A^+(C)=\sup_nn^{\alpha-1}\sum_{k=1}^nk^{-\alpha}=1\ \ \text{and}\ \ 
A^\downarrow(C)=\sup_nn^{\alpha}=1.
\]

If $\alpha>0$, then $\inf_{j\ge k}j^{-\alpha}=0$ and if $\alpha\le0$, then $\inf_{j\ge k}j^{-\alpha}=k^{-\alpha}$. Therefore, $A^\uparrow(C)=0$ when $\alpha>0$ and 
\[
A^\uparrow(C)=\sup_nn^{\alpha-1}\sum_{k=1}^nk^{-\alpha}=1
\]
when $\alpha\le0$.
\end{proof}

The Copson matrix $C^*$ is nonnegative so all four parts of Theorem \ref{NormOfB} apply, although the fourth part applies trivially. Recall that $\mathcal D(C^*)$ consists of all real sequences $x$ for which $\sum_{k=1}^\infty\frac{x_k}k$ converges in $\mathbb R$.
\begin{corollary}\label{SpecC*}  Let $u,v\in\ell^+$. The inequality 
\[
\sup_n\Big|\sum_{k=n}^\infty\frac{x_k}k\Big|v_n\le A\sup_k\frac{|x_k|}{u_k}
\] 
holds for all real sequences $x\in\mathcal D(C^*)$ with $A=A(C^*)$; for all nonnegative sequences $x\in\mathcal D(C^*)$ with $A=A^+(C^*)$; for all nonnegative, nonincreasing sequences $x\in\mathcal D(C^*)$ with $A=A^\downarrow(C^*)$; and for all nonnegative, nondecreasing sequences $x\in\mathcal D(C^*)$ with $A=A^\uparrow(C^*)$. In each case the constant $A$ is best possible. Here
\begin{align*}
A(C^*)=A^+(C^*)&=\|C^*u\|\1v=
\sup_nv_n\sum_{k=n}^\infty\frac{u_k}k;\\
A^\downarrow(C^*)&=\|C^*(u^\downarrow)\|\1v=
\sup_nv_n\sum_{k=n}^\infty\frac1k\min_{j\le k}u_j;\\
A^\uparrow(C^*)&=0. \quad(\ell^\uparrow\cap\mathcal D(C^*)=\{0\}.)
\end{align*}
\end{corollary}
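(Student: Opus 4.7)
The plan is to obtain Corollary \ref{SpecC*} as a direct application of Theorem \ref{NormOfB} to $B = C^*$, exactly paralleling how Corollary \ref{SpecC} was obtained for $B = C$. Since every entry of $C^*$ is nonnegative, we have $|C^*| = C^* = (C^*)^+$ and $(C^*)^- = 0$; this simultaneously verifies the hypotheses on the signs of the rows needed for parts \ref{p3} and \ref{p4} of the theorem (both ``positives before negatives'' and ``negatives before positives'' hold vacuously).

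First I would handle parts \ref{p1} and \ref{p2}. Part \ref{p1} yields $A(C^*) = \||C^*|u\|_{\ell^\infty(v)} = \|C^*u\|_{\ell^\infty(v)}$, and part \ref{p2} yields $A^+(C^*) = \max(\|C^*u\|_{\ell^\infty(v)}, 0) = \|C^*u\|_{\ell^\infty(v)}$, giving the common value displayed. Expanding $C^*u$ using the definition $(C^*u)_n = \sum_{k=n}^\infty u_k/k$ produces the claimed supremum formula.

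Next, for part \ref{p3}, the row sum of $C^*$ at row $n$ is $\sum_{k=n}^\infty 1/k$, which lies in $[0, \infty]$, so in particular is nonnegative; combined with the vacuous ``positives before negatives'' condition, the hypotheses hold and part \ref{p3} yields $A^\downarrow(C^*) = \|(C^*)^+(u^\downarrow)\|_{\ell^\infty(v)} = \|C^*(u^\downarrow)\|_{\ell^\infty(v)}$. Writing $(u^\downarrow)_k = \min_{j \le k}u_j$ and expanding the action of $C^*$ gives the stated double expression.

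The only point demanding any real care is part \ref{p4}. Since each row of $C^*$ has infinite sum (the harmonic tail diverges), we are in the second case of part \ref{p4}, which declares $\ell^\uparrow \cap \mathcal D(C^*) = \{0\}$ and $A^\uparrow(C^*) = 0$; for the reader's convenience I would also note directly that if $x \in \ell^\uparrow$ is nonzero then $x_K > 0$ for some $K$, forcing $\sum_{k=K}^\infty x_k/k \ge x_K \sum_{k=K}^\infty 1/k = \infty$, so indeed $x \notin \mathcal D(C^*)$. This is the only step where anything beyond bookkeeping occurs, and it is immediate. I expect no genuine obstacle; the corollary is essentially a transcription of Theorem \ref{NormOfB} for the specific nonnegative matrix $C^*$.
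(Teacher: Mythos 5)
Your proposal is correct and is exactly the route the paper takes: the corollary is stated as an immediate application of Theorem \ref{NormOfB} to the nonnegative matrix $C^*$, with parts \ref{p1}--\ref{p3} giving the displayed formulas and the divergent harmonic tails placing part \ref{p4} in its trivial second case. Your extra direct check that a nonzero $x\in\ell^\uparrow$ fails to lie in $\mathcal D(C^*)$ merely repeats the Case~2 argument already inside the proof of Theorem \ref{NormOfB}.
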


The operator norms of $C^*$ as a map on cones in a power weighted $\ell^\infty$ space are given in the next theorem. We use $\zeta$ to denote the Riemann zeta function.
\begin{theorem}\label{SpecC*power} Let $\alpha\in\mathbb R$. The inequality 
\[
\sup_n\Big|\sum_{k=n}^\infty\frac{x_k}k\Big|n^\alpha\le A\sup_k|x_k|k^\alpha
\]  
holds for all real sequences $x\in\mathcal D(C^*)$ if and only if it holds for all nonnegative sequences $x\in\mathcal D(C^*)$ if and only if it holds for all nonnegative, nonincreasing sequences $x\in\mathcal D(C^*)$. In this case the best constant $A$ is
\[
A
=\begin{cases}
\infty,&\alpha\le0;\\
\zeta(\alpha+1),&\alpha>0.\end{cases}
\]
Except for the zero sequence, there are no nonnegative, nondecreasing sequences in $\mathcal D(C^*)$. The inequality holds for the zero sequence $x$ with best constant $A=0$.
\end{theorem}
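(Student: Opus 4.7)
The approach is to apply Corollary~\ref{SpecC*} with the power weights $u_k = k^{-\alpha}$ and $v_n = n^\alpha$. Under these substitutions,
\[
A(C^*) = A^+(C^*) = \sup_n n^\alpha \sum_{k=n}^\infty k^{-\alpha-1},\qquad A^\uparrow(C^*)=0,
\]
while $A^\downarrow(C^*) = \sup_n n^\alpha \sum_{k=n}^\infty k^{-1}\min_{j\le k}j^{-\alpha}$. When $\alpha \ge 0$, $j \mapsto j^{-\alpha}$ is nonincreasing, so $\min_{j\le k}j^{-\alpha} = k^{-\alpha}$ and $A^\downarrow(C^*) = A(C^*)$. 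The assertion about nondecreasing sequences is immediate from $A^\uparrow(C^*) = 0$ together with $\ell^\uparrow \cap \mathcal D(C^*) = \{0\}$ as recorded in the corollary. For $\alpha \le 0$ the exponent $-\alpha-1$ is at least $-1$, so $\sum_{k=n}^\infty k^{-\alpha-1}$ diverges for every $n$, and so does the minorised series; hence $A = \infty$, consistent with interpreting $\zeta(\alpha+1)$ as $+\infty$ when $\alpha + 1 \le 1$.

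For $\alpha > 0$, set $f(n) = n^\alpha \sum_{k=n}^\infty k^{-\alpha-1}$, so $f(1) = \zeta(\alpha+1)$ automatically gives $\sup_n f(n) \ge \zeta(\alpha+1)$. The main task is the reverse inequality, which I plan to establish by pinching with two simple estimates. First, separating the leading term $n^{-\alpha-1}$ of the tail sum and using $\sum_{k=n+1}^\infty k^{-\alpha-1} \le \int_n^\infty t^{-\alpha-1}\,dt = n^{-\alpha}/\alpha$ (valid because $t^{-\alpha-1}$ is decreasing) yields the coarse upper bound $f(n) \le 1/n + 1/\alpha$. Second, $t \mapsto t^{-\alpha-1}$ is convex on $(0,\infty)$ for $\alpha > 0$, so the trapezoidal inequality $\int_k^{k+1} t^{-\alpha-1}\,dt \le \frac{1}{2}(k^{-\alpha-1}+(k+1)^{-\alpha-1})$ holds for each $k \ge 1$; summing over $k$ and simplifying the right-hand side yields $1/\alpha \le \zeta(\alpha+1) - 1/2$. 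For $n \ge 2$ the two estimates combine to give $f(n) \le 1/2 + 1/\alpha \le \zeta(\alpha+1) = f(1)$, so the supremum is attained at $n = 1$.

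The main obstacle I anticipate is precisely this upper bound on $\sup_n f(n)$. A direct monotonicity proof for $f$ is possible but becomes technically delicate for large $\alpha$, because the naive integral bounds on the tail sum are too loose to dominate the factor $(n+1)^\alpha - n^\alpha$ that appears when one tries to prove $f(n) \ge f(n+1)$ term by term. The pinching argument sketched above sidesteps that by exploiting the convexity of $t^{-\alpha-1}$ on the $\zeta$ side to produce a lower bound on $\zeta(\alpha+1)$ that is exactly sharp enough to absorb the coarse upper bound on $f(n)$ for every $n \ge 2$.
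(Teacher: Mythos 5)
Your proposal is correct, and the key step is handled by a genuinely different argument than the paper's. Both proofs start identically, by substituting $u_k=k^{-\alpha}$, $v_n=n^\alpha$ into Corollary~\ref{SpecC*} and disposing of the cases $\alpha\le0$ and of $A^\uparrow$ exactly as you do. For $\alpha>0$ the paper proves the stronger statement that $f(n)=n^\alpha\sum_{k=n}^\infty k^{-\alpha-1}$ is \emph{decreasing} in $n$ (the first inequality of Remark 4.10 of \cite{B}), by writing $f(n)$ as a moving average $\bigl(\sum_{k\ge n}a_k w_k\bigr)/\bigl(\sum_{k\ge n}w_k\bigr)$ with $w_k=k^{-\alpha}-(k+1)^{-\alpha}$ and showing via a log-derivative computation and Bernoulli's inequality that $a_k$ is decreasing. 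You instead prove only what the theorem needs, namely $\sup_n f(n)=f(1)$, by pinching: the integral comparison $f(n)\le 1/n+1/\alpha$ on one side, and the trapezoidal bound $1/\alpha=\int_1^\infty t^{-\alpha-1}\,dt\le\zeta(\alpha+1)-\tfrac12$ (from convexity of $t^{-\alpha-1}$) on the other, so that $f(n)\le\tfrac12+\tfrac1\alpha\le\zeta(\alpha+1)=f(1)$ for $n\ge2$. I checked both estimates and the bookkeeping in the trapezoidal sum; they are sound. Your route is more elementary --- it avoids the derivative computation and the moving-average lemma entirely --- at the cost of not establishing monotonicity of $f$, which the theorem does not require. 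The paper's route yields the extra structural fact that the supremum is a monotone limit, which it reuses in the same spirit in Theorems~\ref{SpecC*Ipower} and~\ref{powerC*C}, so your shortcut would not substitute for it there without further work.
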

\begin{proof} Take $u_k=k^{-\alpha}$ and $v_n=n^\alpha$ in Corollary \ref{SpecC*} to get 
\begin{align*}
A(C^*)=A^+(C^*)&=\sup_nn^\alpha\sum_{k=n}^\infty k^{-\alpha-1};\\
A^\downarrow(C^*)&=
\sup_nn^\alpha\sum_{k=n}^\infty\frac1k\min_{j\le k}j^{-\alpha}.
\end{align*}
If $\alpha>0$, then $\min_{j\le k}j^{-\alpha}=k^{-\alpha}$ so $A(C^*)=A^+(C^*)=A^\downarrow(C^*)$; their common value is
\[
\sup_nn^\alpha\sum_{k=n}^\infty k^{-\alpha-1}\ge\sum_{k=1}^\infty k^{-\alpha-1}=\zeta(\alpha+1).
\]
We show this is actually equality by supplying a proof of the first inequality from Remark 4.10 of \cite{B}, namely, that $n^\alpha\sum_{k=n}^\infty k^{-\alpha-1}$ decreases with $n$: The derivative of $\log(x^{\alpha+1}(x^{-\alpha}-(x+1)^{-\alpha}))$ is 
\[
\frac{\big(1+\frac1x\big)^{\alpha+1}-\big(1+\frac{\alpha+1}x\big)}{(1+x)^{\alpha+1}(x^{-\alpha}-(x+1)^{-\alpha})},
\]
which is positive for $x>0$ by Bernoulli's inequality. Thus,
\[
a_k=\frac1{k^{\alpha+1}(k^{-\alpha}-(k+1)^{-\alpha})}
\]
is a decreasing sequence, and so is its moving average
\[
\frac{\sum_{k=n}^\infty a_k(k^{-\alpha}-(k+1)^{-\alpha})}{\sum_{k=n}^\infty (k^{-\alpha}-(k+1)^{-\alpha})}=n^\alpha\sum_{k=n}^\infty k^{-\alpha-1}.
\]

If $\alpha\le0$, then $\min_{j\le k}j^{-\alpha}=1$ and we have
\[
A(C^*)=A^+(C^*)\ge A^\downarrow(C^*)=\sup_nn^\alpha\sum_{k=n}^\infty \frac1k=\infty.
\]

The final statement of the theorem is evident.
\end{proof}

\subsection{The Ces\`aro and Copson operators minus identity}\label{specificsCC*-I} The matrices we consider here are,
\[
C-I={\scriptstyle\left(\begin{smallmatrix}
0&&&&\\
\vphantom{\frac12}\frac12&-\frac12&&&\\
\vphantom{\frac12}\frac13&\frac13&-\frac23&&\\
\vphantom{\frac12}\frac14&\frac14&\frac14&-\frac34&\\
\vphantom{\frac12}\vdots&\vdots&\vdots&\vdots&\ddots\\
&\hphantom{-1}&\hphantom{-1}&\hphantom{-1}&\hphantom{-1}
\end{smallmatrix}\right)}
\quad\mbox{and}\quad 
C^*-I={\scriptstyle\left(\begin{smallmatrix}  
\vphantom{\frac12}0&\frac12&\frac13&\frac14&\dots\\
\vphantom{\frac12}&-\frac12&\frac13&\frac14&\dots\\
\vphantom{\frac12}&&-\frac23&\frac14&\dots\\
\vphantom{\frac12}&&&-\frac34&\dots\\
&&&&\ddots\\
&\hphantom{-1}&\hphantom{-1}&\hphantom{-1}&\hphantom{-1}
\end{smallmatrix}\right)}.
\]
Parts \ref{p1}, \ref{p2}, and \ref{p3} of Theorem \ref{NormOfB} apply to $C-I$ and part \ref{p4} applies to $I-C$. (See Remark \ref{preproc}.)
\begin{corollary}\label{SpecCI} Let $u,v\in\ell^+$. The inequality 
\[
\sup_n\Big|\Big(\frac1n\sum_{k=1}^nx_k\Big)-x_n\Big|v_n\le A\sup_k\frac{|x_k|}{u_k}
\] 
holds for all real sequences $x$ with $A=A(C-I)$; for all nonnegative sequences $x$ with $A=A^+(C-I)$; for all nonnegative, nonincreasing sequences $x$ with $A=A^\downarrow(C-I)$ and for all nonnegative, nondecreasing sequences $x$ with $A=A^\uparrow(I-C)$. In each case the constant $A$ is best possible. Here
\begin{align*}
A(C-I)&=\||C-I|u\|\1v=\sup_n\frac{v_n}n\Big((n-1)u_n+\sum_{k=1}^{n-1}u_k\Big);\\
A^+(C-I)&=\max(\|(C-I)^+u\|\1v,\|(C-I)^-u\|\1v)\\&=\sup_n\frac{v_n}n\max\Big((n-1)u_n,\sum_{k=1}^{n-1}u_k\Big);\\
A^\downarrow(C-I)&=\|(C-I)^+(u^\downarrow)\|\1v=\sup_n\frac{v_n}n\sum_{k=1}^{n-1}\min_{j\le k}u_j;\\
A^\uparrow(I-C)&=\|(I-C)^+(u^\uparrow)\|\1v=\sup_n\frac{v_n}n(n-1)\inf_{j\ge n}u_j.
\end{align*}
\end{corollary}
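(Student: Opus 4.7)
The plan is to apply Theorem \ref{NormOfB} directly, with $B=C-I$ for parts \ref{p1}, \ref{p2}, \ref{p3}, and with $B=I-C$ for part \ref{p4}, invoking Remark \ref{preproc} to justify that the absolute value in the inequality makes $A^\uparrow(C-I)$ and $A^\uparrow(I-C)$ coincide. The key preliminary observation is the structure of the rows of $C-I$: row $n$ consists of the entries $\tfrac1n,\tfrac1n,\dots,\tfrac1n$ in columns $1,\dots,n-1$, the entry $\tfrac1n-1=-\tfrac{n-1}n$ in column $n$, and zeros thereafter. So each row of $C-I$ has positives before negatives, and its sum is $\tfrac{n-1}n-\tfrac{n-1}n=0\ge0$. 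Correspondingly, each row of $I-C$ has negatives before positives, a (finite) sum equal to $0$, and so satisfies the hypotheses of Theorem \ref{NormOfB}\ref{p4}.

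Next I would record the positive and negative parts explicitly. The matrix $(C-I)^+$ has entries $1/n$ for $k<n$ and $0$ otherwise, and $(C-I)^-$ has the single entry $(n-1)/n$ on the diagonal, zero elsewhere; $|C-I|=(C-I)^++(C-I)^-$. Therefore
\[
(|C-I|u)_n=\tfrac1n\sum_{k=1}^{n-1}u_k+\tfrac{n-1}n u_n,\quad ((C-I)^+u)_n=\tfrac1n\sum_{k=1}^{n-1}u_k,\quad ((C-I)^-u)_n=\tfrac{n-1}n u_n.
\]
Substituting these into parts \ref{p1} and \ref{p2} of Theorem \ref{NormOfB} yields the displayed formulas for $A(C-I)$ and $A^+(C-I)$ after taking the supremum over $n$ with weight $v_n$ (and commuting $\sup$ with $\max$ for the second).

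For $A^\downarrow(C-I)$, Theorem \ref{NormOfB}\ref{p3} applies since the hypothesis was verified above; replacing $u$ by $u^\downarrow$ in the expression for $(C-I)^+u$ and using $u^\downarrow_k=\min_{j\le k}u_j$ gives the stated sum. Similarly, for the fourth inequality I note that $|((C-I)x)_n|=|((I-C)x)_n|$, so the best constant in the corollary's inequality over $\ell^\uparrow$ equals $A^\uparrow(I-C)$. Theorem \ref{NormOfB}\ref{p4} applies because each row of $I-C$ has negatives before positives, a nonnegative sum, and each row sum is finite. The matrix $(I-C)^+$ has the single nonzero entry $(n-1)/n$ on the diagonal, so $((I-C)^+(u^\uparrow))_n=\tfrac{n-1}n u^\uparrow_n=\tfrac{n-1}n\inf_{j\ge n}u_j$, which gives the final formula.

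There is no serious obstacle; the only potential source of error is the bookkeeping of signs and the off-by-one on the diagonal entry, together with the mild subtlety that part \ref{p4} is to be applied to $I-C$ rather than $C-I$, a switch that is harmless at the level of the absolute-value inequality by Remark \ref{preproc}.
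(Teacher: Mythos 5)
Your proposal is correct and follows exactly the route the paper intends: the paper states this corollary without a written proof, relying on precisely the application of Theorem \ref{NormOfB} parts \ref{p1}--\ref{p3} to $C-I$ and part \ref{p4} to $I-C$ via Remark \ref{preproc}, together with the row-structure and positive/negative-part computations you carry out. Your sign bookkeeping and the identification of $(C-I)^-$ as the diagonal entry $(n-1)/n$ are all accurate.
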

 
Proposition 3.5 of \cite{BS20} may be compared with the case $0\le \alpha<1$ of the next theorem: The norm of $C-I$ restricted to the cone of nonnegative, nonincreasing sequences coincides with the norm of $H-I$ restricted to the cone of  nonnegative, nonincreasing functions. 

On power-weighted $\ell^\infty$, $C-I$ exhibits different behavior on all four different cones, the cone of real sequences, the cone of nonnegative sequences, the cone of nonnegative, nonincreasing sequences and the cone of nonnegative, nondecreasing sequences. The dependence of the operator norm on the power gets particularly interesting for the third cone. 

\begin{theorem}\label{SpecCIpower} Let $\alpha\in\mathbb R$, set $s_1=-\infty$ and set $s_m=1+\frac{\log(1-1/m)}{\log(1+1/m)}$ for $m=2,3,\dots$. The inequality 
\[
\sup_n\Big|\Big(\frac1n\sum_{k=1}^nx_k\Big)-x_n\Big|n^\alpha\le A\sup_k|x_k|k^\alpha
\]  
holds for all real sequences $x$ with best constant
\[
A=\begin{cases}\dfrac{2-\alpha}{1-\alpha},&\alpha<1;\\\infty,&\alpha\ge1.\end{cases}
\]
It holds for all nonnegative sequences $x$ with best constant
\[
A=\begin{cases}1,&\alpha<0;\\
\dfrac1{1-\alpha},&0\le\alpha<1;\\
\infty,&\alpha\ge1.\end{cases}
\]
It holds for all nonnegative, nonincreasing sequences $x$ with best constant
\[
A=\begin{cases}(m+1)^{\alpha-1}m,&s_m<\alpha\le s_{m+1},\ m=1,2,3,\dots;\\
\dfrac1{1-\alpha},&0\le\alpha<1;\\
\infty,&\alpha\ge1.\end{cases}
\]
It holds for all nonnegative, nondecreasing sequences $x$ with best constant
\[
A=\begin{cases}1,&\alpha\le0;\\
0,&\alpha>0.\end{cases}
\]
\end{theorem}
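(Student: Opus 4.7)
The plan is to substitute $u_k=k^{-\alpha}$ and $v_n=n^\alpha$ into each of the four formulas in Corollary \ref{SpecCI} and evaluate the resulting suprema. The central analytic input is that $n^{\alpha-1}\sum_{k=1}^{n-1}k^{-\alpha}$ is monotone increasing in $n$ and tends to $\int_0^1 x^{-\alpha}\,dx=1/(1-\alpha)$ for $\alpha<1$ (and to $\infty$ for $\alpha\ge 1$). Monotonicity follows from Proposition~3 of \cite{BJ} via the identity $n^{\alpha-1}\sum_{k=1}^{n-1}k^{-\alpha}=n^{\alpha-1}\sum_{k=1}^n k^{-\alpha}-n^{-1}$, handled separately for the regimes $\alpha\ge 0$ and $\alpha<0$.

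With this in hand, three of the four cones are routine. For all real sequences, $A(C-I)=\sup_n[(n-1)/n+n^{\alpha-1}\sum_{k=1}^{n-1}k^{-\alpha}]$ is a sum of two increasing quantities whose limits sum to $1+1/(1-\alpha)=(2-\alpha)/(1-\alpha)$, or diverges when $\alpha\ge 1$. For nonnegative sequences, $A^+(C-I)$ involves $\max((n-1)n^{-\alpha},\sum_{k=1}^{n-1}k^{-\alpha})$: when $\alpha<0$ the first term dominates because $k^{-\alpha}\le n^{-\alpha}$ for $k<n$, yielding supremum $1$; when $\alpha\ge 0$ the second dominates, returning $1/(1-\alpha)$. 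For the nondecreasing cone, $\inf_{j\ge n}j^{-\alpha}$ is $0$ when $\alpha>0$ and equals $n^{-\alpha}$ when $\alpha\le 0$, after which the formula collapses to $0$ or $\sup_n(n-1)/n=1$.

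The main obstacle is the nonincreasing cone. When $\alpha\ge 0$, $\min_{j\le k}j^{-\alpha}=k^{-\alpha}$ and the analysis reduces to the nonnegative case. When $\alpha<0$, $\min_{j\le k}j^{-\alpha}=1$, so $A^\downarrow(C-I)=\sup_n(n-1)n^{\alpha-1}$, and $f(n):=(n-1)n^{\alpha-1}$ has a unique interior maximizer among positive integers. I would identify it by solving the two inequalities $f(m+1)\ge f(m)$ and $f(m+1)\ge f(m+2)$; taking logarithms rearranges them into $\alpha\ge s_m$ and $\alpha\le s_{m+1}$, respectively. A short calculation shows $(s_m)_{m\ge 2}$ is strictly increasing with $s_m\uparrow 0$, so, with the convention $s_1=-\infty$, the half-open intervals $(s_m,s_{m+1}]$ partition $(-\infty,0)$, giving the stated formula $A^\downarrow(C-I)=m(m+1)^{\alpha-1}$. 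Consistency at $\alpha=0$ is automatic since $m(m+1)^{-1}\to 1=1/(1-0)$ as $m\to\infty$, matching the $\alpha\ge 0$ formula across the boundary.
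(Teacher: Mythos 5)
Your overall route is the same as the paper's: substitute $u_k=k^{-\alpha}$, $v_n=n^\alpha$ into Corollary \ref{SpecCI}, use monotonicity of $n^{\alpha-1}\sum_{k=1}^{n-1}k^{-\alpha}$ to identify the suprema for the first two cones, and reduce the nonincreasing cone for $\alpha<0$ to the discrete maximization of $g(n)=(n-1)n^{\alpha-1}$ via the two inequalities $g(m+1)\ge g(m)$ and $g(m+1)\ge g(m+2)$, which rearrange to $s_m\le\alpha\le s_{m+1}$. Those parts, and the nondecreasing cone, check out.

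There is one genuine gap: your derivation of the key monotonicity fails for $\alpha<0$. From the identity $n^{\alpha-1}\sum_{k=1}^{n-1}k^{-\alpha}=n^{\alpha-1}\sum_{k=1}^{n}k^{-\alpha}-n^{-1}$ and Proposition 3 of \cite{BJ}, the case $\alpha\ge0$ is fine (increasing plus increasing), but for $\alpha<0$ Proposition 3 says the first summand is \emph{decreasing} while $-n^{-1}$ is increasing, so the sum's monotonicity does not follow; you assert the two regimes are ``handled separately'' without supplying the $\alpha<0$ argument. This matters because your evaluation of $A(C-I)=\sup_n\big(1-\tfrac1n+n^{\alpha-1}\sum_{k=1}^{n-1}k^{-\alpha}\big)$ as $1+\tfrac1{1-\alpha}$ for $\alpha<0$ rests on both summands increasing to their limits. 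The paper avoids the issue by citing Proposition 4 of \cite{BJ}, which gives that $n^{\alpha-1}\sum_{k=1}^{n-1}k^{-\alpha}$ increases with $n$ for \emph{all} real $\alpha$; alternatively, for $\alpha<0$ you could note that $\frac1n\sum_{k=1}^{n-1}(k/n)^{-\alpha}$ is a lower Riemann sum of the increasing function $x^{-\alpha}$ on $[0,1]$, hence bounded above by $\int_0^1x^{-\alpha}\,dx=\frac1{1-\alpha}$, so the supremum still equals the limit. Either repair closes the gap; as written, the step would not go through.
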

\begin{proof} Take $u_k=k^{-\alpha}$ and $v_n=n^\alpha$ in Corollary \ref{SpecCI} to get 
\begin{align*}
A(C-I)&=\sup_n\Big(1-\frac1n+n^{\alpha-1}\sum_{k=1}^{n-1} k^{-\alpha}\Big);\\
A^+(C-I)&=\sup_n\max\Big(1-\frac1n,n^{\alpha-1}\sum_{k=1}^{n-1} k^{-\alpha}\Big);\\
A^\downarrow(C-I)&=
\sup_nn^{\alpha-1}\sum_{k=1}^{n-1}\min_{j\le k}j^{-\alpha};\\
A^\uparrow(I-C)&=
\sup_nn^{\alpha-1}(n-1)\inf_{j\ge n}j^{-\alpha}.
\end{align*}
Proposition 4 in \cite{BJ} shows that $n^{\alpha-1}\sum_{k=1}^{n-1}k^{-\alpha}$ increases with $n$ for all $\alpha\in\mathbb R$. It tends to 
\[
\int_0^1 x^{-\alpha}\,dx=\begin{cases}\dfrac1{1-\alpha},&\alpha<1;\\\infty,&\alpha\ge1.\end{cases}
\]
The first two statements of the theorem follow.
 
If $\alpha \ge0$, then $\min_{j\le k}j^{-\alpha}=k^{-\alpha}$ so $A^\downarrow(C-I)=A^+(C-I)$. If $\alpha<0$, then $\min_{j\le k}j^{-\alpha}=1$ so $A^\downarrow(C-I)=\sup_n n^{\alpha-1}(n-1)$. Consider the function $g(x)=x^{\alpha-1}(x-1)$ for $x\ge1$. Looking at $g'(x)$ we find that $g$ is strictly increasing on $(0,1-1/\alpha)$ and strictly decreasing on $(1-1/\alpha,\infty)$. It follows that a positive integer $m$ satisfies $\sup_n g(n)=g(m+1)$ if and only if $g(m+1)\ge g(m)$ and $g(m+1)\ge g(m+2)$. These two conditions may be expressed as $s_m\le\alpha\le s_{m+1}$. 

If $\alpha>0$, then $\inf_{j\ge n}j^{-\alpha}=0$ so $A^\uparrow(I-C)=0$. If $\alpha\le0$, then $\inf_{j\ge n}j^{-\alpha}=n^{-\alpha}$ so $A^\uparrow(I-C)=\sup_n (n-1)/n=1$.
\end{proof}

Parts \ref{p1}, \ref{p2}, and \ref{p4} of Theorem \ref{NormOfB} apply to $C^*-I$, although part \ref{p4} gives a trivial result. Note that $\mathcal D(C^*-I)=\mathcal D(C^*)$.

\begin{corollary}\label{SpecC*I} Let $u,v\in\ell^+$. The inequality 
\[
\sup_n\Big|\Big(\sum_{k=n}^\infty\frac{x_k}k\Big)-x_n\Big|v_n\le A\sup_k\frac{|x_k|}{u_k}
\] 
holds for all real sequences $x\in\mathcal D(C^*)$ with $A=A(C^*-I)$; for all nonnegative sequences $x\in\mathcal D(C^*)$ with $A=A^+(C^*-I)$; and for all nonnegative, nondecreasing sequences $x\in\mathcal D(C^*)$ with $A=A^\uparrow(C^*-I)$. In each case the constant $A$ is best possible. Here
\begin{align*}
A(C^*-I)&=\||C^*-I|u\|\1v=\sup_nv_n\Big(\frac{n-1}nu_n+\sum_{k=n+1}^\infty\frac{u_k}k\Big);\\
A^+(C^*-I)&=\max(\|(C^*-I)^+u\|\1v,\|(C^*-I)^-u\|\1v)\\&=\sup_nv_n\max\Big(\frac{n-1}nu_n,\sum_{k=n+1}^{\infty}\frac{u_k}k\Big);\\
A^\uparrow(C^*-I)&=0. \text{ (The first row of $C^*-I$ has an infinite sum.)}
\end{align*}
\end{corollary}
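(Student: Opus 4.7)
The plan is to apply Theorem \ref{NormOfB} directly to the matrix $B=C^*-I$, reading off its positive part, its negative part, and its absolute value from the explicit matrix display above the corollary. The entries of $C^*-I$ are $1/k$ in row $n$ column $k$ when $k>n$, the diagonal entry $-(n-1)/n$ in row $n$ column $n$, and zero below the diagonal. Hence $(C^*-I)^+$ carries the strictly super-diagonal entries $1/k$ (for $k>n$), $(C^*-I)^-$ carries only the diagonal entries $(n-1)/n$, and $|C^*-I|$ carries both.

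First I would compute the three actions
\[
(|C^*-I|u)_n=\frac{n-1}{n}u_n+\sum_{k=n+1}^\infty\frac{u_k}{k},\quad
((C^*-I)^+u)_n=\sum_{k=n+1}^\infty\frac{u_k}{k},\quad
((C^*-I)^-u)_n=\frac{n-1}{n}u_n,
\]
and then invoke Theorem \ref{NormOfB}\ref{p1} and Theorem \ref{NormOfB}\ref{p2}. Taking the weighted $\ell^\infty(v)$-supremum yields the stated formula for $A(C^*-I)$ immediately, and for $A^+(C^*-I)$ it yields the max of two suprema; since both sequences inside the suprema are nonnegative, the max and the sup interchange to produce the single expression $\sup_n v_n\max(\tfrac{n-1}{n}u_n,\sum_{k=n+1}^\infty u_k/k)$.

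For the $\ell^\uparrow$-statement I would apply part \ref{p4} of Theorem \ref{NormOfB}. The hypotheses are immediate: every row of $C^*-I$ has its unique negative entry on the diagonal and all later entries positive, so each row has negatives before positives. For the row sum, row $n$ of $C^*-I$ totals $-(n-1)/n+\sum_{k=n+1}^\infty 1/k=\infty$ for every $n\geq1$ (trivially so when $n=1$, since the diagonal entry is zero). Each row therefore has an infinite, hence nonnegative, sum, so by Case~2 of part \ref{p4}, $\ell^\uparrow\cap\mathcal D(C^*-I)=\{0\}$ and $A^\uparrow(C^*-I)=0$. Since $\mathcal D(C^*-I)=\mathcal D(C^*)$, this is the statement recorded in the corollary.

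I do not foresee a genuine obstacle here; the whole corollary is a bookkeeping application of Theorem \ref{NormOfB}. The mildest care points are the observation that $\max$ commutes with $\sup$ only because $(C^*-I)^\pm u$ are nonnegative sequences, and the verification that part \ref{p3} is not applicable (the negative entry sits strictly inside each row, so rows of $C^*-I$ have neither positives-before-negatives nor, after multiplication by $-1$, the opposite pattern), which is why no $A^\downarrow$-formula appears in the corollary.
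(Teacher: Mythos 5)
Your proposal is correct and follows exactly the paper's (implicit) proof: the corollary is presented as a direct application of parts \ref{p1}, \ref{p2} and \ref{p4} of Theorem \ref{NormOfB} to $B=C^*-I$, with the same identification of $(C^*-I)^+$, $(C^*-I)^-$ and $|C^*-I|$ and the same observation that a row of $C^*-I$ has an infinite (hence nonnegative) sum, forcing $\ell^\uparrow\cap\mathcal D(C^*-I)=\{0\}$. One immaterial slip in your closing aside: the rows of $I-C^*$ \emph{do} have positives before negatives (nonnegative diagonal entry followed by negative entries), so part \ref{p3} fails for $I-C^*$ not because of the sign pattern but because its row sums equal $-\infty$ and are therefore not nonnegative.
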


We omit the trivial case when considering the power-weighted inequalities.

\begin{theorem}\label{SpecC*Ipower} Let $\alpha\in\mathbb R$. The inequality 
\[
\sup_n\Big|\Big(\sum_{k=n}^\infty\frac{x_k}k\Big)-x_n\Big|n^\alpha\le A\sup_k|x_k|k^\alpha
\]  
holds for all real sequences $x\in\mathcal D(C^*)$ with
\[
A=\begin{cases}\infty,&\alpha\le0;\\
1+\dfrac1\alpha,&\alpha>0.\end{cases}
\]
It holds for all nonnegative sequences $x\in\mathcal D(C^*)$ with
\[
A=\begin{cases}\infty,&\alpha\le0;\\
\dfrac1\alpha,&0<\alpha<1;\\
1,&\alpha\ge1.\end{cases}
\]
In each case the value of $A$ is best possible.
\end{theorem}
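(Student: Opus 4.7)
The plan is to apply Corollary~\ref{SpecC*I} with $u_k=k^{-\alpha}$ and $v_n=n^\alpha$, which reduces both assertions to analyzing the single quantity $T(n):=n^\alpha\sum_{k=n+1}^\infty k^{-\alpha-1}$ alongside the elementary factor $(n-1)/n$. After substitution the formulas become
\[
A(C^*-I)=\sup_n\Bigl(\tfrac{n-1}{n}+T(n)\Bigr)\quad\text{and}\quad A^+(C^*-I)=\sup_n\max\Bigl(\tfrac{n-1}{n},\,T(n)\Bigr).
\]

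First I would dispatch the case $\alpha\le 0$: the tail $\sum_{k=n+1}^\infty k^{-\alpha-1}$ dominates the harmonic tail and is therefore divergent, so $T(n)=\infty$ for every $n$ and both best constants are $\infty$, matching the claimed value.

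For $\alpha>0$ the main tool is a two-sided integral comparison with the decreasing function $x\mapsto x^{-\alpha-1}$, namely
\[
n^\alpha\int_{n+1}^\infty x^{-\alpha-1}\,dx\le T(n)\le n^\alpha\int_n^\infty x^{-\alpha-1}\,dx,
\]
which simplifies to $\tfrac1\alpha(1+1/n)^{-\alpha}\le T(n)\le \tfrac1\alpha$. Hence $T(n)\le 1/\alpha$ for every $n$ and $T(n)\to 1/\alpha$ as $n\to\infty$. Combined with $(n-1)/n\le 1$ and $(n-1)/n\to 1$, the sum $(n-1)/n+T(n)$ has uniform upper bound $1+1/\alpha$ and common limit $1+1/\alpha$, giving $A(C^*-I)=1+1/\alpha$. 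For the second assertion, the elementary identity $\sup_n\max(a_n,b_n)=\max(\sup_n a_n,\sup_n b_n)$ reduces the computation to $A^+(C^*-I)=\max(1,1/\alpha)$, which is $1/\alpha$ when $0<\alpha<1$ and $1$ when $\alpha\ge 1$, exactly as claimed.

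I do not foresee a substantive obstacle in this argument; each step is either an application of Corollary~\ref{SpecC*I} or a standard integral comparison. The closest thing to a subtlety is that for $\alpha>0$ neither supremum is attained at any finite $n$, so the bookkeeping must record each best constant as a limit rather than a maximum. The two-sided integral comparison above handles this transparently, producing the uniform upper bound and the matching limit simultaneously.
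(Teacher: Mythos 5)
Your argument is correct and follows the paper's skeleton exactly up to the key quantitative step: both reduce to the formulas of Corollary~\ref{SpecC*I} with $u_k=k^{-\alpha}$, $v_n=n^\alpha$, and both dispose of $\alpha\le0$ by divergence of the tail. Where you diverge is in how you pin down $T(n)=n^\alpha\sum_{k=n+1}^\infty k^{-\alpha-1}$ for $\alpha>0$. The paper proves the stronger fact that $T(n)$ \emph{increases} with $n$ (supplying a proof of the second inequality from Remark~4.10 of Bennett's memoir): it writes $T(n)$ as a moving average of the sequence $a_k=k^{-\alpha-1}/((k-1)^{-\alpha}-k^{-\alpha})$, shows $a_k$ is increasing via a log-derivative computation and Bernoulli's inequality, and then identifies the limit $1/\alpha$ by a Riemann-sum argument. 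You instead use only a two-sided integral comparison, $\tfrac1\alpha(1+1/n)^{-\alpha}\le T(n)\le\tfrac1\alpha$, which delivers simultaneously the uniform bound $T(n)\le1/\alpha$ and the limit $T(n)\to1/\alpha$; that is all the theorem actually requires, since $\sup_n\bigl(\tfrac{n-1}n+T(n)\bigr)$ and $\sup_n\max\bigl(\tfrac{n-1}n,T(n)\bigr)$ are then forced to equal $1+1/\alpha$ and $\max(1,1/\alpha)$ respectively. Your route is more elementary and self-contained; the paper's buys the monotonicity of $T(n)$ as a byproduct, which it cites as being of independent interest and reuses in spirit elsewhere (e.g.\ in Theorems~\ref{SpecC*power} and~\ref{powerC*C}). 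No gap in your argument.
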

\begin{proof} Take $u_k=k^{-\alpha}$ and $v_n=n^\alpha$ in Corollary \ref{SpecC*I} to get 
\begin{align*}
A(C^*-I)&=\sup_n\Big(1-\frac1n+n^{\alpha}\sum_{k=n+1}^\infty k^{-\alpha-1}\Big);\\
A^+(C^*-I)&=\sup_n\max\Big(1-\frac1n,n^{\alpha}\sum_{k=n+1}^\infty k^{-\alpha-1}\Big).
\end{align*}
If $\alpha\le0$ the sum diverges so both of these are infinite. If $\alpha>0$ we need the second inequality from Remark 4.10 of \cite{B}: $n^\alpha\sum_{k=n+1}^\infty k^{-\alpha-1}$ increases with $n$. The derivative of $\log(x^{\alpha+1}((x-1)^{-\alpha}-x^{-\alpha}))$ is 
\[
\frac{\big(1-\frac{\alpha+1}x\big)-\big(1-\frac1x\big)^{\alpha+1}}{(x-1)^{\alpha+1}((x-1)^{-\alpha}-x^{-\alpha})},
\]
which is negative for $x>1$ by Bernoulli's inequality. Thus,
\[
a_k=\frac1{k^{\alpha+1}((k-1)^{-\alpha}-k^{-\alpha})}
\]
is an increasing sequence, and so is its moving average
\[
\frac{\sum_{k=n+1}^\infty a_k((k-1)^{-\alpha}-k^{-\alpha})}{\sum_{k=n+1}^\infty ((k-1)^{-\alpha}-k^{-\alpha})}=n^\alpha\sum_{k=n+1}^\infty k^{-\alpha-1}.
\]
We recognize these as (improper) Riemann sums, and get
\[
n^\alpha\sum_{k=n+1}^\infty k^{-\alpha-1}
=\frac1n\sum_{k=n+1}^\infty \Big(\frac kn\Big)^{-\alpha-1}\to\int_1^\infty x^{-\alpha-1}\,dx=\frac1\alpha
\]
as $n\to\infty$. Therefore $A(C^*-I)=1+1/\alpha$ and $A^+(C^*-I)=\max(1,1/\alpha)$. This completes the proof.
\end{proof}

\subsection{Two required operators}\label{specificsreductions} 

The next two operators appear in Theorems \ref{CC*reduction} and \ref{C*Creduction}. Their operators norms are needed to complete the work on \eqref{CandC*}. In matrix form, they are
\[
C-S^*={\scriptstyle\left(\begin{smallmatrix}  
\vphantom{\frac12}1&-1&&&&\\
\vphantom{\frac12}\frac12&\frac12&-1&&&\\
\vphantom{\frac12}\frac13&\frac13&\frac13&-1&&\\
\vphantom{\frac12}\frac14&\frac14&\frac14&\frac14&-1&\\
\vdots&\vdots&\vdots&\vdots&\vdots&\ddots\\
&\hphantom{-1}&\hphantom{-1}&\hphantom{-1}&\hphantom{-1}
\end{smallmatrix}\right)}
\quad\mbox{and}\quad 
(C^*-S)D={\scriptstyle\left(\begin{smallmatrix}  
\vphantom{\frac12}\frac12&\frac16&\frac1{12}&\frac1{20}&\dots\\
\vphantom{\frac12}-\frac12&\frac16&\frac1{12}&\frac1{20}&\dots\\
\vphantom{\frac12}&-\frac13&\frac1{12}&\frac1{20}&\dots\\
\vphantom{\frac12}&&-\frac14&\frac1{20}&\dots\\
\vphantom{\frac12}&&&\ddots&\ddots\\
&\hphantom{-1}&\hphantom{-1}&\hphantom{-1}&\hphantom{-1}
\end{smallmatrix}\right)}.
\]
Parts \ref{p1}, \ref{p2}, and \ref{p3} of Theorem \ref{NormOfB} apply to $C-S^*$ and part \ref{p4} applies to $S^*-C$.
\begin{corollary}\label{SpecCS*} Let $u,v\in\ell^+$. The inequality 
\[
\sup_n\Big|\Big(\frac1n\sum_{k=1}^nx_k\Big)-x_{n+1}\Big|v_n\le A\sup_k\frac{|x_n|}{u_n}
\] 
holds for all real sequences $x$ with $A=A(C-S^*)$; for all nonnegative sequences $x$ with $A=A^+(C-S^*)$; for all nonnegative, nonincreasing sequences $x$ with $A=A^\downarrow(C-S^*)$ and for all nonnegative, nondecreasing sequences $x$ with $A=A^\uparrow(S^*-C)$. In each case the constant $A$ is best possible. Here
\begin{align*}
A(C-S^*)&=\||C-S^*|u\|\1v=\sup_nv_n\Big(u_{n+1}+\frac1n\sum_{k=1}^nu_k\Big);\\
A^+(C-S^*)&=\max(\|(C-S^*)^+u\|\1v,\|(C-S^*)^-u\|\1v)\\&=\sup_nv_n\max\Big(\frac1n\sum_{k=1}^nu_k,u_{n+1}\Big);\\
A^\downarrow(C-S^*)&=\|(C-S^*)^+(u^\downarrow)\|\1v=\sup_n\frac{v_n}n\sum_{k=1}^n\min_{j\le k}u_j;\\
A^\uparrow(S^*-C)&=\|(S^*-C)^+(u^\uparrow)\|\1v=\sup_nv_n\inf_{j\ge n+1}u_j.
\end{align*}
\end{corollary}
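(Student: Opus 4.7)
My plan is to read off each of the four formulas as a direct specialization of Theorem \ref{NormOfB} applied to $C-S^*$ (for the first three parts) and to $S^*-C$ (for the fourth, via the row-sign preprocessing of Remark \ref{preproc}). The work is really just bookkeeping: verifying the structural hypotheses on the rows and then evaluating the matrix products $|B|u$, $B^\pm u$, $B^+ u^\downarrow$, and $B^+ u^\uparrow$ explicitly.

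First I would record the row structure of the two matrices. Row $n$ of $C-S^*$ has the positive entries $\tfrac1n$ in columns $1,\dots,n$ followed by a single negative entry $-1$ in column $n+1$; all other entries are zero. Hence each row has positives before negatives, and the row sum is $\frac{n}{n}-1=0$, which is nonnegative. This validates hypotheses (i)--(iii) of Theorem \ref{NormOfB} for $C-S^*$. Each row of $S^*-C$ is the negation of the corresponding row of $C-S^*$, so it has negatives before positives with row sum $0$ (in particular finite and nonnegative); Remark \ref{preproc} justifies replacing $C-S^*$ by $S^*-C$ without changing the left-hand side of inequality \eqref{B}, so part (iv) applies to $S^*-C$ and yields the formula for $A^\uparrow(C-S^*)$.

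With the hypotheses in hand, the four explicit formulas are obtained by computing the relevant matrix products. For part (i) the matrix $|C-S^*|$ is $\tfrac1n$ in columns $1,\dots,n$ of row $n$ and $1$ in column $n+1$, giving $(|C-S^*|u)_n=\tfrac1n\sum_{k=1}^n u_k+u_{n+1}$. For part (ii) the splitting $C-S^*=(C-S^*)^+-(C-S^*)^-$ separates the Ces\`aro-like part from the single $-1$ entry, so $((C-S^*)^+u)_n=\tfrac1n\sum_{k=1}^n u_k$ and $((C-S^*)^-u)_n=u_{n+1}$; taking the max yields $A^+(C-S^*)$. For part (iii) I substitute $u^\downarrow$ for $u$ in the $(C-S^*)^+$ computation and invoke $u^\downarrow_k=\min_{j\le k}u_j$. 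For part (iv) I note that $(S^*-C)^+$ has the single entry $1$ in column $n+1$ of row $n$, so $((S^*-C)^+(u^\uparrow))_n=u^\uparrow_{n+1}$, which I then unfold using $u^\uparrow_k=\inf_{j\ge k}u_j$.

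There is no genuine obstacle here; the proof is essentially a verification. The only point that requires any care is the preprocessing step for the nondecreasing cone (using $S^*-C$ rather than $C-S^*$ so that the row sign pattern is compatible with part (iv) of Theorem \ref{NormOfB}), and the observation that the row sums are $0$, which is nonnegative and hence satisfies the hypothesis exactly at its boundary.
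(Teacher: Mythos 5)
Your proof is correct and follows exactly the route the paper intends: the corollary is presented as an immediate application of Theorem \ref{NormOfB} (with Remark \ref{preproc} supplying the sign change needed for the nondecreasing cone), and your verification of the row structure of $C-S^*$ (positives before negatives, row sum $0$) together with the explicit products $|B|u$, $B^{\pm}u$, $B^+(u^\downarrow)$, $B^+(u^\uparrow)$ is all that is required. One remark: your computation $((S^*-C)^+(u^\uparrow))_n=u^\uparrow_{n+1}=\inf_{j\ge n+1}u_j$ is the correct one, so the expression $\inf_{j\le n+1}u_j$ in the printed statement appears to be a misprint for $\inf_{j\ge n+1}u_j$ (compare the analogous formula for $A^\uparrow(I-C)$ in Corollary \ref{SpecCI}).
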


Parts \ref{p1}, \ref{p2}, and \ref{p4} of Theorem \ref{NormOfB} apply to $(C^*-S)D$ and part \ref{p3} applies to $(S-C^*)D$. Note that $\mathcal D((C^*-S)D)=\mathcal D((S-C^*)D)=\mathcal D(C^*D)$.
\begin{corollary}\label{SpecC*S} Let $u,v\in\ell^+$ and for convenience let $u_0=x_0=0$. The inequality 
\[
\sup_n\Big|\Big(\sum_{k=n}^\infty \frac{x_k}{k(k+1)}\Big)-\frac{x_{n-1}}n\Big|v_n\le A\sup_k\frac{|x_n|}{u_n}
\] 
holds for all real sequences $x\in\mathcal D(C^*D)$ with $A=A((C^*-S)D)$; for all nonnegative sequences $x\in\mathcal D(C^*D)$ with $A=A^+((C^*-S)D)$; for all nonnegative, nonincreasing sequences $x\in\mathcal D(C^*D)$ with $A=A^\downarrow((S-C^*)D)$; and for all nonnegative, nondecreasing sequences $x\in\mathcal D(C^*D)$ with $A=A^\uparrow((C^*-S)D)$. In each case the constant $A$ is best possible. Here
\begin{align*}
A((C^*-S)D)&=\||(C^*-S)D|u\|\1v\\
&=\sup_nv_n\Big(\frac{u_{n-1}}n+\sum_{k=n}^\infty\frac{u_k}{k(k+1)}\Big);\\
A^+((C^*-S)D)&=\max(\|((C^*-S)D)^+u\|\1v,\|((C^*-S)D)^-u\|\1v)\\
&=\sup_nv_n\max\Big(\frac{u_{n-1}}n,\sum_{k=n}^\infty\frac{u_k}{k(k+1)}\Big);\\
A^\downarrow((S-C^*)D)&=\|((S-C^*)D)^+(u^\downarrow)\|\1v=\sup_n\frac{v_n}n\min_{j\le n-1}u_j;\\
A^\uparrow((C^*-S)D)&=\|((C^*-S)D)^+(u^\uparrow)\|\1v\\
&=\sup_nv_n\sum_{k=n}^\infty\frac1{k(k+1)}\inf_{j\ge k}u_j.
\end{align*}
\end{corollary}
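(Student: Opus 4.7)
The plan is to specialize Theorem~\ref{NormOfB} to $B=(C^*-S)D$ and its negation $(S-C^*)D$, following the pattern of Corollary~\ref{SpecCS*}. Under the convention $u_0=x_0=0$, it is convenient to treat $B$ as indexed by columns $k\geq 0$: row $n$ carries a single negative entry $-1/n$ at column $n-1$ together with positive entries $1/(k(k+1))$ at each column $k\geq n$. The formula $((C^*-S)Dx)_n=\sum_{k=n}^\infty\frac{x_k}{k(k+1)}-\frac{x_{n-1}}{n}$ appearing in the statement then follows from the usual matrix action.

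Parts (i) and (ii) apply to $B$ with no hypothesis to verify: one simply reads off $|B|u$, $B^+u$, and $B^-u$ from the matrix description above. For part (iv), the same matrix satisfies ``negatives before positives'' in each row (the lone negative at column $n-1$ lies strictly left of all positives), and the row sums telescope via $\sum_{k=n}^\infty\tfrac{1}{k(k+1)}=\tfrac{1}{n}$ to $0$ for every row, with $u_0=0$ absorbing the $n=1$ case. All row sums are thus finite and nonnegative, so Theorem~\ref{NormOfB}(iv) gives $A^\uparrow((C^*-S)D)=\|B^+(u^\uparrow)\|\1v$, which unwinds to the stated sum $\sum_{k=n}^\infty\tfrac{1}{k(k+1)}\inf_{j\geq k}u_j$.

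For part (iii), Remark~\ref{preproc} lets me multiply every row by $-1$ and apply Theorem~\ref{NormOfB}(iii) to $(S-C^*)D$ instead. After the sign flip, the positive $+1/n$ at column $n-1$ now precedes the negatives $-1/(k(k+1))$ at columns $k\geq n$ (``positives before negatives''), and the same telescoping keeps every row sum equal to $0$. Only the subdiagonal survives in $((S-C^*)D)^+$, giving $(((S-C^*)D)^+(u^\downarrow))_n=\tfrac{u^\downarrow_{n-1}}{n}=\tfrac{1}{n}\min_{j\leq n-1}u_j$, once again with $u_0=0$ taking care of $n=1$. The sole nontrivial step throughout is the telescoping identity that underwrites the row-sum hypotheses of parts (iii) and (iv); beyond that, the proof is a mechanical transcription of Theorem~\ref{NormOfB} closely mirroring the proof of Corollary~\ref{SpecCS*}, and I anticipate no genuine obstacle.
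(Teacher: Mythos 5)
Your handling of parts \ref{p1}, \ref{p2} and \ref{p4} of Theorem \ref{NormOfB} is correct and is exactly what the paper intends (the paper offers no justification beyond the sentence preceding the corollary, so fleshing out the hypotheses is the right thing to do). The gap is in part \ref{p3}. Your claim that after negating every row ``the same telescoping keeps every row sum equal to $0$'' fails for the first row: row $1$ of $(C^*-S)D$ has no subdiagonal entry (there is no genuine column $0$), so row $1$ of $(S-C^*)D$ is $(-\tfrac12,-\tfrac16,-\tfrac1{12},\dots)$ with sum $-1$, and the nonnegative-sum hypothesis of Theorem \ref{NormOfB}\ref{p3} is violated. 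The phantom column $k=0$ does not rescue this: to apply part \ref{p3} to the extended matrix you would need the extended sequence $(x_0,x_1,x_2,\dots)=(0,x_1,x_2,\dots)$ to lie in $\ell^\downarrow$, which forces $x\equiv0$, so the extended statement says nothing about the cone of interest. (This is precisely why the same device \emph{is} legitimate in part \ref{p4}: prepending $0$ preserves membership in $\ell^\uparrow$.)

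The failure is not cosmetic. The correct use of Remark \ref{preproc} is to negate only the rows $n\ge2$, leaving row $1$ of $(C^*-S)D$ alone (it already has positives before negatives, vacuously, and sum $1$); Theorem \ref{NormOfB}\ref{p3} then yields
\[
A^\downarrow\big((C^*-S)D\big)=\max\Big(v_1\sum_{k=1}^\infty\frac1{k(k+1)}\min_{j\le k}u_j,\ \sup_{n\ge2}\frac{v_n}n\min_{j\le n-1}u_j\Big),
\]
which carries a row-one term that the displayed formula omits. That this term is genuinely needed can be seen from $u\equiv1$, $v=(1,0,0,\dots)$: taking $x=(1,\dots,1,0,0,\dots)$ with $N$ ones gives left-hand side $1-\tfrac1{N+1}\to1$, while $\sup_n\tfrac{v_n}n\min_{j\le n-1}u_j=0$. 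So in the nonincreasing case your argument (and, as far as I can tell, the formula in the corollary itself --- which, fortunately, is not used in Theorems \ref{BestConstants}--\ref{powerC*C}) must be repaired as above. Everything else in your proposal is sound and matches the paper's intended route.
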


We forgo an investigation of the power-weighted case for these operators. Their principal interest is their use in the proof of Theorem \ref{BestConstants} and the special case is not required there. Theorems \ref{powerCC*} and \ref{powerC*C} deduce the power-weighted case of Theorem \ref{BestConstants} directly.

\section{Best constants in the two-operator inequalities}

Combining Theorems \ref{CC*reduction} and \ref{C*Creduction} with the formulas given in the previous subsection for  
\[
A(C-S^*),\ A^\downarrow(C-S^*),\ A((C^*-S)D),\quad\mbox{and}\quad A^\uparrow((C^*-S)D)
\]
gives us answers to our original questions, the best constants in the inequalities of \eqref{CandC*}.

Fix $u,v\in\ell^+$. Let $A(C,C^*)$ and $A^+(C,C^*)$ denote the smallest $A\ge0$ such that inequality
\[
\|Cx\|\1v\le A\|C^*x\|\2u
\]
holds for all $x\in\mathcal D(C^*)$ and for all $x\in\ell^+\cap\mathcal D(C^*)$, respectively.
Similarly, let $A(C^*,C)$ and $A^+(C^*,C)$ denote the smallest $A\ge0$ such that inequality
\[
\|C^*x\|\1v\le A\|Cx\|\2u
\]
holds for all $x\in\mathcal D(C^*)$ and for all $x\in\ell^+\cap\mathcal D(C^*)$, respectively.

\begin{theorem}\label{BestConstants}  Let $u,v\in\ell^+$. Then, taking $u_0=0$, we have
\begin{align*} 
A(C,C^*)&=\sup_nv_n\Big(u_{n+1}+\frac1n\sum_{k=1}^nu_k\Big);\\
A^+(C,C^*)&=\sup_n\frac{v_n}n\sum_{k=1}^n\min_{j\le k}u_j;\\
A(C^*,C)&=\sup_nv_n\Big(\frac{n-1}nu_{n-1}+\sum_{k=n}^\infty\frac{u_k}{k+1}\Big);\\
A^+(C^*,C)&=\sup_nv_n\sum_{k=n}^\infty\frac1{k(k+1)}\inf_{j\ge k}ju_j.
\end{align*}
\end{theorem}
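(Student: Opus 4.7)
The plan is to combine the reduction theorems \ref{CC*reduction} and \ref{C*Creduction} with the operator-norm formulas of Corollaries \ref{SpecCS*} and \ref{SpecC*S}. First I would invoke Theorem \ref{CC*reduction} to identify $A(C,C^*)$ with the least $A$ making $\|(C-S^*)y\|\1v\le A\|y\|\2u$ hold for every $y\in\ell$ with $y_n\to 0$, and to identify $A^+(C,C^*)$ with the analogous least $A$ on the class $y\in\ell^\downarrow$ with $y_n\to 0$. Substituting the values of $A(C-S^*)$ and $A^\downarrow(C-S^*)$ read off from Corollary \ref{SpecCS*} should then produce the first two formulas.

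For the remaining two, I would apply Theorem \ref{C*Creduction} to reduce $A(C^*,C)$ and $A^+(C^*,C)$ to the least $A$ for which $\|(C^*-S)Dz\|\1v\le A\|z\|\2w$ holds on the corresponding classes, where the auxiliary weight is $w_k=ku_k$. Substituting $w_k=ku_k$ into the formulas for $A((C^*-S)D)$ and $A^\uparrow((C^*-S)D)$ given in Corollary \ref{SpecC*S}, and simplifying $w_k/(k(k+1))=u_k/(k+1)$, $w_{n-1}/n=(n-1)u_{n-1}/n$ (using the convention $u_0=0$), and $\inf_{j\ge k}w_j=\inf_{j\ge k}ju_j$, will yield the third and fourth formulas.

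The main obstacle to watch for is a subtlety in the reduction theorems: each imposes decay conditions ($y_n\to 0$ or $(Dz)_n\to 0$) that are absent from the statements of Corollaries \ref{SpecCS*} and \ref{SpecC*S}, so one must check that restricting to these subclasses does not diminish the constants. The upper-bound halves of Theorem \ref{NormOfB} transfer trivially. For the matching lower bounds, I would observe that the extremal test sequences used in the proofs of parts \ref{p1}, \ref{p2}, and \ref{p3} of Theorem \ref{NormOfB} are finitely supported, hence automatically tend to $0$; while in Case 1 of part \ref{p4} they are bounded by an arbitrary $P$, so their images under $D$ vanish at infinity. A quick check that every row of $(C^*-S)D$ has finite sum (namely $1$ in row $1$ and $0$ elsewhere, since $\sum_{k=n}^\infty 1/(k(k+1))=1/n$) also confirms that the trivial Case 2 of part \ref{p4} never arises. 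Once these decay-condition checks are in place, the four formulas follow by mechanical substitution.
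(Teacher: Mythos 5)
Your proposal is correct and follows essentially the same route as the paper: reduce via Theorems \ref{CC*reduction} and \ref{C*Creduction}, read off the constants from Corollaries \ref{SpecCS*} and \ref{SpecC*S} with $w_k=ku_k$, and close the gap created by the decay conditions $y_n\to0$ and $(Dz)_n\to0$ by noting that the extremal sequences realizing the lower bounds are finitely supported (parts \ref{p1} and \ref{p3}) or bounded (Case 1 of part \ref{p4}), hence satisfy those conditions. The paper does exactly this, merely writing out those test sequences explicitly inside the proof rather than citing the proof of Theorem \ref{NormOfB}.
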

\begin{proof} Since \eqref{C-S*0} holds with $A=A(C-S^*)$, Theorem \ref{CC*reduction} shows that \eqref{CC*} does as well. Thus $A(C,C^*)\le A(C-S^*)$. On the other hand, by definition, \eqref{CC*} holds with $A=A(C,C^*)$ and by Theorem \ref{CC*reduction}, so does \eqref{C-S*0}. Fix $n$ and let 
\[
y=(u_1, u_2,\dots, u_n, -u_{n+1},0,0,\dots).
\]
Since $y_k\to0$ as $k\to\infty$ and $\|y\|\2u\le 1$, we get
\[
 v_n\Big(u_{n+1}+\frac1n\sum_{k=1}^nu_k\Big)
=v_n((C-S^*)y)_n\le \|(C-S^*)y\|\1v\le A(C,C^*).
\]
Using this in the formula for $A(C-S^*)$ from Corollary \ref{SpecCS*}  yields
\[
A(C-S^*)=\sup_nv_n\Big(u_{n+1}+\frac1n\sum_{k=1}^nu_k\Big)\le A(C,C^*).
\]
Therefore,
\[
A(C,C^*)=\sup_nv_n\Big(u_{n+1}+\frac1n\sum_{k=1}^nu_k\Big).
\]

Clearly, \eqref{C-S*d0} holds with $A=A^\downarrow(C-S^*)$ and, by Theorem \ref{CC*reduction}, so does \eqref{CC*+}. Thus $A^+(C,C^*)\le A^\downarrow(C-S^*)$. By definition, \eqref{CC*+} holds with $A=A^+(C,C^*)$ and Theorem \ref{CC*reduction}  shows that \eqref{C-S*d0} does also. Fix $n$ and let 
\[
y=(u_1^\downarrow, u_2^\downarrow,\dots, u_n^\downarrow,0,0,\dots).
\]
Then $y\in\ell^\downarrow$, $y_k\to0$ as $k\to\infty$ and, by Lemma \ref{downandup}, $\|y\|\2u=\|y\|\2{u^\downarrow}\le 1$. Therefore 
\[
 v_n\Big(\frac1n\sum_{k=1}^nu_k^\downarrow\Big)
=v_n((C-S^*)y)_n\le \|(C-S^*)y\|\1v\le A^+(C,C^*).
\]
The formula for $A^\downarrow(C-S^*)$ from Corollary \ref{SpecCS*} implies
\[
A^\downarrow(C-S^*)=\sup_n\frac{v_n}n\sum_{k=1}^n\min_{j\le k}u_j\le A^+(C,C^*).
\]
Therefore,
\[
A^+(C,C^*)=\sup_n\frac{v_n}n\sum_{k=1}^n\min_{j\le k}u_j.
\]

Let $w_k=ku_k$ for all $k$ and note that $\mathcal D((C^*-S)D)=\mathcal D(C^*D)$.

Replacing $u$ by $w$  in the formula for $A((C^*-S)D)$ from Corollary \ref{SpecC*S} shows that \eqref{C*-S0} holds with $A$ replaced by
\[
\tilde A=\sup_nv_n\Big(\frac{w_{n-1}}n+\sum_{k=n}^\infty\frac{w_k}{k(k+1)}\Big)=
\sup_nv_n\Big(\frac{n-1}nu_{n-1}+\sum_{k=n}^\infty\frac{u_k}{k+1}\Big).
\]
By Theorem \ref{C*Creduction}, \eqref{C*C} also holds with $A=\tilde A$. Thus $A(C^*,C)\le \tilde A$. Theorem \ref{C*Creduction} also shows that \eqref{C*C} and hence \eqref{C*-S0} holds with $A=A(C^*,C)$. Fix $n$ and $K>n$, and let 
\[
z=(0,0,\dots,0,-w_{n-1},w_n,w_{n+1},\dots w_K,0,0,\dots).
\]
Evidently, $z\in\mathcal D(C^*D)$, $(Dz)_k\to0$ as $k\to\infty$, and $\|z\|\2w\le1$, so we get $\|(C^*-S)Dz\|\1v\le A(C^*,C)$. It follows that
\[
 v_n\Big(\frac{w_{n-1}}n+\sum_{k=n}^K\frac{w_k}{k(k+1)}\Big)
=v_n((C^*-S)Dz)_n\\
\le A(C^*,C).
\]
Letting $K\to\infty$ and taking the supremum over $n$, gives $\tilde A\le A(C^*,C)$, and we conclude that $A(C^*,C)=\tilde A$.

Replacing $u$ by $w$ in the formula for $A^\uparrow((C^*-S)D)$ from Corollary \ref{SpecC*S} shows that \eqref{C*-Su0} holds with $A$ replaced by
\[
\tilde A^\uparrow=\sup_nv_n\sum_{k=n}^\infty\frac{w_k^\uparrow}{k(k+1)}=
\sup_nv_n\sum_{k=n}^\infty\frac1{k(k+1)}\inf_{j\ge k}ju_j.
\]
By Theorem \ref{C*Creduction}, \eqref{C*C+} also holds with $A=\tilde A^\uparrow$. Thus $A^+(C^*,C)\le \tilde A^\uparrow$. Theorem \ref{C*Creduction} also shows that \eqref{C*C+} and hence \eqref{C*-Su0} holds with $A=A^+(C^*,C)$. Fix $n$ and $K>n$, and let 
\[
z=(0,0,\dots,0,w_n^\uparrow,w_{n+1}^\uparrow,\dots w_K^\uparrow,w_K^\uparrow,w_K^\uparrow,\dots).
\]
Then $(C^*Dz)_K=\sum_{k=K}^\infty\frac{w_K^\uparrow}{k(k+1)}<\infty$ so $z\in \ell^\uparrow\cap\mathcal D(C^*D)$, $(Dz)_k\to0$ as $k\to\infty$, and, by Lemma \ref{downandup}, $\|z\|\2w=\|z\|\2{w^\uparrow}\le1$. Therefore,
\[
 v_n\sum_{k=n}^K\frac{w_k^\uparrow}{k(k+1)}
\le v_n((C^*-S)Dz)_n\\
\le \|(C^*-S)Dz\|\1v\\
\le A^+(C^*,C).
\]
Letting $K\to\infty$ and taking the supremum over $n$, we get $\tilde A^\uparrow\le A^+(C^*,C)$, and conclude that $A^+(C^*,C)=\tilde A^\uparrow$.
\end{proof}

We split the power-weighted inequalities for the two-operator inequalities into two theorems because the techniques of simplification differ. 
\begin{theorem}\label{powerCC*} Let $\alpha\in\mathbb R$. The inequality
\[
\sup_n\Big|\frac1n\sum_{k=1}^nx_k\Big|n^\alpha\le A\sup_n\Big|\sum_{k=n}^\infty\frac{x_k}k\Big|n^\alpha
\]
holds for all $x\in\mathcal D(C^*)$ with 
\[
A=\begin{cases}1+2^{-\alpha},&\alpha\le0;\\
\dfrac{2-\alpha}{1-\alpha},&0<\alpha<1;\\
\infty,&\alpha\ge1.\end{cases}
\]
It holds for all nonnegative $x\in\mathcal D(C^*)$ with
\[
A=\begin{cases}1,&\alpha\le0;\\
\dfrac1{1-\alpha},&0<\alpha<1;\\
\infty,&\alpha\ge1.\end{cases}
\]
In each case the constant $A$ is best possible.
\end{theorem}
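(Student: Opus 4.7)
The plan is to specialize Theorem \ref{BestConstants} to the power weights $u_k=k^{-\alpha}$, $v_n=n^\alpha$ and then carry out a case analysis on $\alpha$, using the monotonicity results already cited from Proposition 3 of \cite{BJ}. Substituting gives
\[
A(C,C^*)=\sup_n\Big[\Big(\frac{n}{n+1}\Big)^\alpha+n^{\alpha-1}\sum_{k=1}^n k^{-\alpha}\Big]
\quad\text{and}\quad
A^+(C,C^*)=\sup_n n^{\alpha-1}\sum_{k=1}^n\min_{j\le k}j^{-\alpha},
\]
after noting that $v_nu_{n+1}=(n/(n+1))^\alpha$. The factor of $n^{\alpha-1}\sum_{k=1}^n k^{-\alpha}$ is exactly the quantity analyzed in the proof of Theorem \ref{SpecCpower}, so its behavior is already available and can be reused.

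For the nonnegative case, I would simply repeat the split already performed for $A^\downarrow(C)$. If $\alpha\ge0$, then $j^{-\alpha}$ is nonincreasing in $j$ so $\min_{j\le k}j^{-\alpha}=k^{-\alpha}$, and $A^+(C,C^*)$ coincides with $A^\downarrow(C)$ from Theorem \ref{SpecCpower}, giving $1/(1-\alpha)$ when $0\le\alpha<1$ and $\infty$ when $\alpha\ge1$. If $\alpha<0$, then $j^{-\alpha}$ is nondecreasing so the minimum equals $1$, reducing the expression to $\sup_n n^\alpha=1$. This yields exactly the stated piecewise formula for $A^+$.

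For the general case, let $f(n)=(n/(n+1))^\alpha$ and $g(n)=n^{\alpha-1}\sum_{k=1}^n k^{-\alpha}$, so that $A(C,C^*)=\sup_n(f(n)+g(n))$. When $\alpha\ge0$, $f$ is nondecreasing with limit $1$ and, by Proposition 3 of \cite{BJ}, $g$ is nondecreasing with limit $\int_0^1 x^{-\alpha}\,dx$, so the sup equals $1+1/(1-\alpha)=(2-\alpha)/(1-\alpha)$ for $0\le\alpha<1$ and $\infty$ for $\alpha\ge1$. When $\alpha<0$, the exponent $-\alpha$ is positive and $(n+1)/n$ is nonincreasing in $n$, so $f(n)=((n+1)/n)^{-\alpha}$ is nonincreasing with $f(1)=2^{-\alpha}$; by the same cited proposition $g$ is nonincreasing with $g(1)=1$. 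Hence the sup is attained at $n=1$ and equals $1+2^{-\alpha}$. Agreement of the two formulas at $\alpha=0$ (both give $2$) provides a sanity check.

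The main obstacle is mostly bookkeeping: one must be careful that the monotonicity directions of $f$ and $g$ align in each regime so that the supremum can be evaluated as a limit or as the value at $n=1$, rather than as a nontrivial optimum. Once this alignment is observed, the remaining computations are routine and the "best possible" claim is inherited directly from Theorem \ref{BestConstants}.
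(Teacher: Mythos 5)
Your proposal is correct and follows essentially the same route as the paper: substitute $u_k=k^{-\alpha}$, $v_n=n^\alpha$ into Theorem \ref{BestConstants}, invoke the monotonicity of $n^{\alpha-1}\sum_{k=1}^nk^{-\alpha}$ from Proposition 3 of \cite{BJ} together with that of $(n/(n+1))^\alpha$, and evaluate the supremum at $n=1$ for $\alpha\le0$ and as a limit (Riemann sum) for $\alpha>0$. The only cosmetic difference is that you reuse the computation from Theorem \ref{SpecCpower} for the nonnegative case rather than redoing it inline, which is harmless.
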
 
\begin{proof}Take $u_k=k^{-\alpha}$ and $v_n=n^\alpha$ in Theorem \ref{BestConstants} to see that the best constant $A$, taken over all $x\in\mathcal D(C^*)$, is
\[
A(C,C^*)=\sup_n\Big(\Big(\frac n{n+1}\Big)^\alpha+n^{\alpha-1}\sum_{k=1}^nk^{-\alpha}\Big)
\]
and the best constant $A$, taken over all nonnegative $x\in\mathcal D(C^*)$, is
\[
A^+(C,C^*)=\sup_nn^{\alpha-1}\sum_{k=1}^n\min_{j\le k}j^{-\alpha}.
\]
By Proposition 3 of \cite{BJ}, $n^{\alpha-1}\sum_{k=1}^nk^{-\alpha}$ decreases with $n$ when $\alpha\le 0$ and increases with $n$ when $\alpha>0$. The same is true of $\big(\frac n{n+1}\big)^\alpha$. So if $\alpha\le 0$, $A(C,C^*)=1+2^{-\alpha}$ and if $\alpha>0$, $\big(\frac n{n+1}\big)^\alpha\to1$ and
\[
n^{\alpha-1}\sum_{k=1}^nk^{-\alpha}=\frac1n\sum_{k=1}^n\Big(\frac kn\Big)^{-\alpha}\to\int_0^1 x^{-\alpha}\,dx=\begin{cases}\dfrac1{1-\alpha},&0<\alpha<1;\\
\infty,&\alpha\ge1,\end{cases}
\]
as $n\to\infty$ so 
\[
A(C,C^*)=\begin{cases}\dfrac{2-\alpha}{1-\alpha},&0<\alpha<1;\\
\infty,&\alpha\ge1,\end{cases}
\]

If $\alpha\le0$, then $\min_{j\le k}j^{-\alpha}=1$ so $A^+(C,C^*)= 1$. If $\alpha>0$, then $\min_{j\le k}j^{-\alpha}=k^{-\alpha}$ so as above we have
\[
A^+(C,C^*)=\sup_nn^{\alpha-1}\sum_{k=1}^nk^{-\alpha}=\begin{cases}\dfrac1{1-\alpha},&0<\alpha<1;\\
\infty,&\alpha\ge1.\end{cases}
\]
\end{proof}

In the case of nonnegative sequences, the best constants given above agree with those that appear in Theorem 3.7 of \cite{BS20} for the operators $H$ and $H^*$ on nonnegative functions, except when $\alpha<0$. The best constants given below, again for nonnegative sequences, agree with the corresponding results from Theorem 3.7 of \cite{BS20} for all values of $\alpha$.

\begin{theorem}\label{powerC*C} Let $\alpha\in\mathbb R$ and set $M_\alpha=\sum_{k=1}^\infty\frac{k^{-\alpha}}{k+1}$. The inequality
\[
\sup_n\Big|\sum_{k=n}^\infty\frac{x_k}k\Big|n^\alpha\le A\sup_n\Big|\frac1n\sum_{k=1}^nx_k\Big|n^\alpha
\]
holds for all $x\in\mathcal D(C^*)$ with 
\[
A=\begin{cases}\infty,&\alpha\le0;\\
1+\dfrac1\alpha,&0<\alpha\le1;\\
2^\alpha M_\alpha,&\alpha>1.\end{cases}
\]
It holds for all nonnegative $x\in\mathcal D(C^*)$ with
\[
A=\begin{cases}\infty,&\alpha\le0;\\
\dfrac1\alpha,&0<\alpha\le1;\\
0,&\alpha>1.\end{cases}
\]
In each case the constant $A$ is best possible.
\end{theorem}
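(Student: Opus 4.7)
The plan is to apply Theorem~\ref{BestConstants} with the power weights $u_k=k^{-\alpha}$ and $v_n=n^\alpha$, then analyze the resulting suprema by the same style of monotonicity argument used in the proofs of Theorems~\ref{SpecC*power} and~\ref{SpecC*Ipower}. Writing $T_n=\sum_{k=n}^\infty k^{-\alpha}/(k+1)$, the substitution gives
\[
A(C^*,C)=\sup_n\Big((1-1/n)^{1-\alpha}+n^\alpha T_n\Big)\quad\text{and}\quad A^+(C^*,C)=\sup_n n^\alpha\sum_{k=n}^\infty\frac{\inf_{j\ge k}j^{1-\alpha}}{k(k+1)},
\]
where in the first expression the factor $(1-1/n)^{1-\alpha}$ is read as $0$ at $n=1$ via the convention $u_0=0$, so that the $n=1$ value of the first supremand is just $M_\alpha$. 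When $\alpha\le 0$, $k^{-\alpha}/(k+1)\ge 1/(k+1)$, so $T_n=\infty$ and both suprema are infinite. When $\alpha>1$ the infimum $\inf_{j\ge k}j^{1-\alpha}$ vanishes as $j\to\infty$, so $A^+(C^*,C)=0$. When $0<\alpha\le 1$ the infimum equals $k^{1-\alpha}$, whence $A^+(C^*,C)=\sup_n n^\alpha T_n$.

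The central step is to recognize $n^\alpha T_n$ as a moving weighted average. Setting $b_k=k^{-\alpha}-(k+1)^{-\alpha}$, the tails telescope to $\sum_{k=n}^\infty b_k=n^{-\alpha}$. Defining $a_k=k^{-\alpha}/((k+1)b_k)$ so that $a_kb_k=k^{-\alpha}/(k+1)$, one obtains $n^\alpha T_n=\big(\sum_{k=n}^\infty a_kb_k\big)/\big(\sum_{k=n}^\infty b_k\big)$, and a short calculation shows $a_k=\phi(1/(k+1))$, where $\phi(t)=t/(1-(1-t)^\alpha)$. The sign of $\phi'(t)$ coincides with the sign of $1-(1-t)^{\alpha-1}(1+(\alpha-1)t)$. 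Bernoulli's inequality, applied to the exponent $1-\alpha\in(0,1)$ when $0<\alpha<1$ and to the exponent $-(\alpha-1)<0$ when $\alpha>1$, gives $(1-t)^{\alpha-1}(1+(\alpha-1)t)\le 1$ for $\alpha>1$ and $\ge 1$ for $0<\alpha<1$, both on $(0,1)$. So $\phi$ is strictly increasing on $(0,1)$ when $\alpha>1$, strictly decreasing when $0<\alpha<1$, and identically $1$ when $\alpha=1$. Since $1/(k+1)$ decreases in $k$, $a_k$ is correspondingly decreasing for $\alpha>1$, increasing for $0<\alpha\le 1$, and constant for $\alpha=1$. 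A standard argument then transfers this monotonicity to $n^\alpha T_n$, whose limit as $n\to\infty$ is $\lim_{t\to 0^+}\phi(t)=1/\alpha$ by L'H\^opital.

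The conclusion combines these observations. For $0<\alpha\le 1$, $n^\alpha T_n$ increases to $1/\alpha$, so $A^+(C^*,C)=1/\alpha$; moreover $(1-1/n)^{1-\alpha}\le 1$ with limit $1$, so $A(C^*,C)=1+1/\alpha$. For $\alpha>1$, both $(1-1/n)^{1-\alpha}$ (since $1-\alpha<0$) and $n^\alpha T_n$ are decreasing in $n\ge 2$, so $F(n):=(1-1/n)^{1-\alpha}+n^\alpha T_n$ is decreasing on $\{2,3,\dots\}$, with maximum $F(2)=2^{\alpha-1}+2^\alpha T_2=2^\alpha M_\alpha$, which exceeds $F(1)=M_\alpha$. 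The main obstacle will be verifying the monotonicity of $\phi$: this reduces to determining the sign of $1-(1-t)^{\alpha-1}(1+(\alpha-1)t)$, which is handled by Bernoulli's inequality but requires careful attention to which direction of the inequality applies on either side of $\alpha=1$.
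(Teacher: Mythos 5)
Your proposal is correct and follows essentially the same route as the paper: substitute the power weights into Theorem~\ref{BestConstants}, then establish the monotonicity of $n^\alpha\sum_{k=n}^\infty k^{-\alpha}/(k+1)$ by writing it as a tail moving average of the very same sequence $a_k$ (your $\phi(1/(k+1))$ is the paper's $a_k$, with Bernoulli's inequality applied on either side of $\alpha=1$ exactly as in the paper), and finish with the $n=2$ evaluation $2^{\alpha-1}+2^\alpha(M_\alpha-\tfrac12)=2^\alpha M_\alpha$. The only cosmetic difference is that you obtain the limit $1/\alpha$ from $\lim_{t\to0^+}\phi(t)$ and the convergence of the weighted tail averages, whereas the paper uses integral comparison estimates; both are valid.
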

\begin{proof}Take $u_k=k^{-\alpha}$ and $v_n=n^\alpha$ in Theorem \ref{BestConstants} to see that the best constant $A$, taken over all $x\in\mathcal D(C^*)$, is
\[
A(C^*,C)=\max\Big(M_\alpha,\sup_{n\ge2}\Big(\Big(\frac{n-1}n\Big)^{1-\alpha}+n^\alpha\sum_{k=n}^\infty\frac{k^{-\alpha}}{k+1}\Big)\Big)
\]
and the best constant $A$, taken over all nonnegative $x\in\mathcal D(C^*)$, is
\[
A^+(C^*,C)=\sup_nn^\alpha\sum_{k=n}^\infty\frac1{k(k+1)}\inf_{j\ge k}j^{1-\alpha}.
\]

We will need another monotonicity result in the spirit of Remark 4.10 of \cite{B}: If $0<\alpha\le1$, then
\[
n^\alpha\sum_{k=n}^\infty \frac{k^{-\alpha}}{k+1}
\] 
increases with $n$ to $1/\alpha$ and if $\alpha>1$, it decreases with $n$. For all $x>0$, the derivative of $\log((x+1)x^\alpha(x^{-\alpha}-(x+1)^{-\alpha}))$ is 
\[
\frac{\big(1+\frac1x\big)^\alpha-\big(1+\frac\alpha x\big)}{(x+1)^{\alpha+1}(x^{-\alpha}-(x+1)^{-\alpha})}
\]
which is nonpositive when $\alpha\le1$ and nonnegative when $\alpha\ge1$ by Bernoulli's inequality. Thus,
\[
a_k=\frac1{(k+1)k^\alpha(k^{-\alpha}-(k+1)^{-\alpha})}
\]
is a nondecreasing sequence when $\alpha\le1$ and is a decreasing sequence when $\alpha\ge1$. Its moving average,
\[
\frac{\sum_{k=n}^\infty a_k(k^{-\alpha}-(k+1)^{-\alpha})}{\sum_{k=n}^\infty (k^{-\alpha}-(k+1)^{-\alpha})}=n^\alpha\sum_{k=n}^\infty\frac{k^{-\alpha}}{k+1}.
\]
shares its monotonicity. If $0<\alpha\le1$, then the last expression goes to $1/\alpha$, as can be seen from the estimates
\[
n^\alpha\int_{n+1}^\infty (x+1)^{-\alpha-1}\,dx\le 
n^\alpha\sum_{k=n}^\infty\frac{k^{-\alpha}}{k+1}\le 
n^\alpha\int_{n-1}^\infty x^{-\alpha-1}\,dx.
\]

If $\alpha\le0$, $A(C^*,C)=\infty$ since the sums diverge. If $0<\alpha\le1$, then $A(C^*,C)=1+1/\alpha$. If $\alpha>1$, then
\[
A(C^*,C)=\max(M_\alpha, 2^{\alpha-1}+2^\alpha(M_\alpha-1/2))
=2^\alpha M_\alpha.
\]

If $\alpha>1$, then $\inf_{j\ge k}j^{1-\alpha}=0$ so $A^+(C^*,C)=0$. If $\alpha\le1$, then $\inf_{j\ge k}j^{1-\alpha}=k^{1-\alpha}$ so
$A^+(C,C^*)=\sup_nn^\alpha\sum_{k=1}^\infty\frac {k^{-\alpha}}{k+1}$, which is infinite when $\alpha\le0$ and equals $1/\alpha$ when $0<\alpha\le1$.

\end{proof}

\subsection*{Acknowledgment} 
The second author would like to thank International Science Program (ISP) for its financial support. The third author was supported by the Natural Sciences and Engineering Research Council of Canada.

\begin{bibdiv}
\begin{biblist}
\bib{B}{article}{
   author={Bennett, Grahame},
   title={Factorizing the classical inequalities},
   journal={Mem. Amer. Math. Soc.},
   volume={120},
   date={1996},
   number={576},
   pages={viii+130},
   issn={0065-9266},
   doi={10.1090/memo/0576},
}
\bib{BJ}{article}{
   author={Bennett, Grahame},
   author={Jameson, Graham},
   title={Monotonic averages of convex functions},
   journal={J. Math. Anal. Appl.},
   volume={252},
   date={2000},
   number={1},
   pages={410--430},
   issn={0022-247X},
   doi={10.1006/jmaa.2000.7087},
}
\bib{J}{article}{
   author={Jameson, G. J. O.},
   title={The $\ell_p$-norm of $C-I$, where $C$ is the Ces\`aro operator},
   journal={Math. Inequal. Appl.},
   volume={24},
   date={2021},
   number={2},
   pages={551--557},
   issn={1331-4343},
   doi={10.7153/mia-2021-24-38},
}
\bib{K}{article}{
   author={Kolyada, V. I.},
   title={On C\`esaro and Copson norms of nonnegative sequences},
   language={English, with English and Ukrainian summaries},
   journal={Ukra\"{\i}n. Mat. Zh.},
   volume={71},
   date={2019},
   number={2},
   pages={220--229},
   issn={1027-3190},
   translation={
      journal={Ukrainian Math. J.},
      volume={71},
      date={2019},
      number={2},
      pages={248--258},
      issn={0041-5995},
   },
}
\bib{BS11}{article}{
   author={Boza, Santiago},
   author={Soria, Javier},
   title={Solution to a conjecture on the norm of the Hardy operator minus
   the identity},
   journal={J. Funct. Anal.},
   volume={260},
   date={2011},
   number={4},
   pages={1020--1028},
   issn={0022-1236},
   doi={10.1016/j.jfa.2010.11.013},
}
\bib{BS19}{article}{
   author={Boza, Santiago},
   author={Soria, Javier},
   title={Averaging operators on decreasing or positive functions:
   equivalence and optimal bounds},
   journal={J. Approx. Theory},
   volume={237},
   date={2019},
   pages={135--152},
   issn={0021-9045},
   doi={10.1016/j.jat.2018.09.001},
}
\bib{BS20}{article}{
   author={Boza, Santiago},
   author={Soria, Javier},
   title={Weak-type and end-point norm estimates for Hardy operators},
   journal={Ann. Mat. Pura Appl. (4)},
   volume={199},
   date={2020},
   number={6},
   pages={2381--2393},
   issn={0373-3114},
   doi={10.1007/s10231-020-00973-8},
}
\bib{Si}{article}{
   author={Sinnamon, Gord},
   title={Norm of the discrete Ces\`aro operator minus identity},
   journal={Math. Inequal. Appl.},
   volume={25},
   date={2022},
   number={1},
   pages={41--48},
   issn={1331-4343},
   doi={10.7153/mia-2022-25-04},
}
\bib{S17}{article}{
   author={Strzelecki, Micha\l },
   title={The $L^p$-norms of the Beurling-Ahlfors transform on radial
   functions},
   journal={Ann. Acad. Sci. Fenn. Math.},
   volume={42},
   date={2017},
   number={1},
   pages={73--93},
   issn={1239-629X},
   doi={10.5186/aasfm.2017.4204},
}
\bib{S20}{article}{
   author={Strzelecki, Micha\l },
   title={Hardy's operator minus identity and power weights},
   journal={J. Funct. Anal.},
   volume={279},
   date={2020},
   number={2},
   pages={108532, 34},
   issn={0022-1236},
   doi={10.1016/j.jfa.2020.108532},
}
\end{biblist}
\end{bibdiv}

\end{document}